 \makeatletter \@addtoreset{equation}{section}
\newtheorem{theorem}{Theorem}
\newtheorem{proposition}{Proposition}
\newtheorem{lemma}{Lemma}
\newtheorem{remark}{Remark}
\newcommand{\R}{\mathbb{R}}
\newcommand{\C}{\mathbb{C}}
\newcommand{\z}{{\mathbb{R}^N}}
\newcommand{\rn}{{\mathbb{R}^N}}
\newcommand{\ccc}{C_c^\infty(\z\setminus\{0\})}
\newcommand{\f}{\mathfrak{a}_\lambda}
\DeclareMathOperator{\re}{Re}
\DeclareMathOperator{\im}{Im}
\begin{document}
\title[]{Fourth-order Schr\"odinger type operator with unbounded coefficients in $L^2(\z)$}
\author{Federica Gregorio, Cristian Tacelli}
\address{Dipartimento di Matematica, Università degli Studi di Salerno, Fisciano, Italy}
\email{fgregorio@unisa.it}
\email{ctacelli@unisa.it}
\thanks{The authors are members of the Gruppo Nazionale per l’Analisi Matematica, la Probabilità
e le loro Applicazioni (GNAMPA) of the Istituto Nazionale di Alta Matematica (INdAM). This article is based upon work from COST Action CA18232 MAT-DYN-NET, supported by COST (European Cooperation in Science and Technology), www.cost.eu.}

\maketitle

\

\begin{abstract}
In this paper we study generation results in $L^2(\z)$  for the fourth order Schr\"odinger type operator with unbounded coefficients of the form
$$A=a^{2} \Delta  ^2+V^{2}$$ 
where  $a(x)=1+|x|^{\alpha}$ and $V=|x|^{\beta}$ with $\alpha>0$ and $\beta >(\alpha-2)^+$. We obtain that $(-A,D(A))$ generates an analytic strongly continuous semigroup in $L^2(\z)$  for $N\geq5$. Moreover,   the maximal domain $D(A)$ can be characterized for $N>8$ by the weighted Sobolev space
 \[
D_2(A)=\{u\in H^{4}(\R^{N})\,:\,V^{2}u\in L^{2}(\R^{N}), |x|^{2\alpha-h}D^{4-h}u\in L^{2}(\R^{N}) \text{ for } h=0,1,2,3,4\}.
\]  
\end{abstract}

{\bf Mathematics Subject Classification.} 47D08, 35J10, 47D06 ,47F05, 35K35.
\medskip

{\bf Keywords.} Higher order elliptic equations, Maximal regularity, Schr\"odinger operators.  

\section{Introduction}

Second order elliptic operators with unbounded coefficients and singular or unbounded potentials have been widely investigated,
there exists nowadays a huge literature, see for example 
\cite{for-lor, lor-rhan, for-gre-rha, boutiah-et-al1, bcgt, boutiah-et-al2,BelhajAli, can-rhan-tac1, can-rhan-tac2, can-gre-rhan-tac, can-tac1, met-oka-sob-spi, met-spi2, met-spi-tac, met-spi-tac2, gre-ker, dur-man-tac1} and the references therein. 

Recently, there is an increasing interest towards elliptic operators of higher order. They are involved in  models of elasticity \cite{mele}, condensation in graphene \cite{sgk}, free
boundary problems \cite{adams} and non-linear elasticity \cite{antman}. 

The semigroup generated by a class of higher order elliptic operators with measurable coefficients
has been  systematically studied by Davies in  \cite{davies95a}.   
Unbounded coefficients have also been considered in recent time. In \cite{agrt}, the authors study generation results for the square of the Kolmogorov operator $L=\Delta +\frac{\nabla \mu}{\mu}\cdot\nabla$ in the weighted space $L^2(\z,d\mu)$ giving sufficient conditions for the characterization of  both the domains of    $L$ and $-L^2$. 
Perturbation of elliptic operators by singular or unbounded potentials is a classical problem studied by many authors. The most important features concern  the characterization of the  domain of the sum operator and kernel estimates for the solution. It is known that there is a strong relation between   Schr\"odinger type operators and Hardy's type inequalities. In this sense, relying on Rellich's inequality, in  \cite{gre-mil}, perturbation of the bi-Laplacian operator by a singular potential of the form $c|x|^{-4}, c<\left(\frac{N(N-4)}{4}\right)^2$ has been considered.  Non-existence result for the more general parabolic equation $u_t=-(-\Delta)^mu+c|x|^{-2m}u$ in $\Omega\times(0,T),m\geq1$ and $c$ greater than a suitable Hardy constant has been stated in \cite{gal-kam}. Special classes of  potentials for higher order operators have been investigated such as   Kato class   in \cite{dav-hinK} and reverse H\"older class  in \cite{sugano, liu-don}, see also the references therein. 

In this paper we propose to start the investigation in $L^2(\z)$ for the following Schr\"odinger type operator
$$A=a^{2} \Delta  ^2+V^{2}$$
with unbounded diffusion   $a(x)=1+|x|^{\alpha}$ and potential $ V=|x|^{\beta}$ where the constants $\alpha,\beta$ are positive  and the potential is supercritical, in the sense that $\beta >\alpha-2$. 
In section \ref{generation}, studying the properties of the form associated to $A+\lambda$ where $\lambda\geq\lambda_0>0$ we prove that $(-A,D(A))$ is the generator of an analytic $C_0$-semigroup in $L^2(\z)$ for $N\geq5$. Moreover, in Section \ref{domain} we aim at characterizing the domain $D(A)$ for suitable values of the dimension $N$.  Up to a transformation, we can consider the operator $(-\Delta)^2+\tilde V^2$ where $\tilde V$   belongs to the reverse H\"older class. Making use  of a priori estimates proved in   \cite{sugano} and some weighted higher order interpolation and Calder\'on-Zygmund estimates, for $N>8$, we characterize   the domain $D(A)$ by the weighted Sobolev space 
\[
D_{2}(A)=\{u\in H^{4}(\R^{N})\,:\,V^{2}u\in L^{2}(\R^{N}), |x|^{2\alpha-h}D^{4-h}u\in L^{2}(\R^{N}) \text{ for } h=0,1,2,3,4\}.
\]
The constraints on the dimension $N\geq5$, for the generation result, and $N>8$, for the domain characterisation, correspond to the application of the  Rellich  inequality and the higher order Rellich  inequality, respectively.

Finally, we prove that the spectrum of $A$ is real and consists of a sequence of negative eigenvalues accumulating at $-\infty$.

\section{Generation in $L^2(\z)$}\label{generation}

Consider the bi-Laplacian operator with unbounded diffusion and potential term
$$A=a^{2} \Delta ^2+V^{2},$$
where  $a(x)=1+|x|^{\alpha}$ and $V=|x|^{\beta}$ with $\alpha>0$ and $\beta >(\alpha-2)^+$.
Let $u,v \in 
\ccc$, since

\begin{align}\label{intA}
\int_\z Au\,\overline v\,dx&
	=\int_\z\left[ a^{2}\left(\Delta^{2}u\right) +V^{2}u\right]\overline v\,dx
	=\int_\z\left[ \Delta u\Delta (a^{2}\overline{v})+V^{2}u\overline v \right]\,dx\\
&	=\int_\z\left[ a^{2} \Delta u\Delta \overline{v}+2 \nabla a^{2}\cdot \nabla \overline{v}\Delta u
		+\Delta a^{2} \Delta u\, \overline{v}+V^{2}u\overline v \right]\,dx \nonumber,
\end{align}
we define the following sesquilinear form
\begin{align}\label{eq:form1}
\f(u,v)&=\int_\z\left[ a^{2}\Delta \overline{v}\Delta u+2\nabla a^{2}\cdot \nabla \overline{v}\Delta u
	+\Delta a^{2} \Delta u\,\overline{v}+(V^{2}+\lambda)u\overline v\right]dx\nonumber\\
&=\int_\z \Big\{(1+|x|^\alpha)^{2}\Delta \overline{v}\Delta u
	+4\alpha(1+|x|^{\alpha})|x|^{\alpha-2} x\cdot \nabla \overline{v}\Delta u \\
&\qquad +2 \alpha\left[ (2\alpha-2+N)|x|^{2\alpha-2}
	+ (\alpha-2+N)|x|^{\alpha-2} \right]
		\overline{v}\Delta u 
		+(V^{2}+\lambda)u\overline v\Big\}dx \nonumber
\end{align}
on 
$\ccc$ for some $\lambda \geq\lambda_{0}$, where $\lambda_{0}>0$ will be chosen later. Note that we propose to define $\f$ on $\ccc$ for further purposes that will be clear in the sequel.


In the following we prove that the form $\f$ is densely defined, accretive, continuous and closable, and that the domain of the
closure  coincides with the weighted  Sobolev  space
\[
D=\{u\in H^2(\z)\,:\,(1+|x|^\alpha)\Delta u,\,|x|^{\alpha-1}\nabla u,\,|x|^{\alpha-2}u, Vu \in L^2(\z)\}
\]
endowed with the  norm
\begin{equation}\label{normaD}
\| u\|_{D}=\|(1+|x|^{\alpha})\Delta u\|_{2}+\||x|^{\alpha-1}|\nabla u|\|_{2}+\| |x|^{\alpha-2}u\|_{2}+\|Vu\|_{2}+\|u\|_{2}.
\end{equation}



We start with some estimate that will be useful in the sequel.

\begin{lemma}\label{lem:stimagamma}
Let $N\geq 5$ and $\gamma>0$. For every real function $u\in 
\ccc$, there exists a constant $k\in\R$,  which need not be positive, depending on $\gamma$ and $N$ such that
\begin{equation*}\label{eq:stimagamma}
\int_\z|x|^\gamma(\Delta^2 u)u\,dx\geq k\int_\z|x|^{\gamma-4}u^2\,dx.
\end{equation*}
\end{lemma}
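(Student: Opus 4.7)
The plan is to integrate by parts twice on the left-hand side, expand using the product rule for the Laplacian, and absorb the resulting weighted gradient term via a weighted Hardy-Rellich bound. Since $u\in\ccc$ has compact support away from the origin, every integration by parts is justified without boundary contributions.

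Starting from $\int_\z|x|^\gamma(\Delta^2 u)u\,dx=\int_\z\Delta(|x|^\gamma u)\,\Delta u\,dx$, the identities $\nabla|x|^\gamma=\gamma|x|^{\gamma-2}x$ and $\Delta|x|^\gamma=\gamma(\gamma+N-2)|x|^{\gamma-2}$ yield
$$\Delta(|x|^\gamma u)=|x|^\gamma\Delta u+2\gamma|x|^{\gamma-2}x\cdot\nabla u+\gamma(\gamma+N-2)|x|^{\gamma-2}u.$$
Substituting and integrating by parts once more in the two cross terms, using
$$\int_\z|x|^{\gamma-2}u\,\Delta u\,dx=\frac{(\gamma-2)(\gamma+N-4)}{2}\int_\z|x|^{\gamma-4}u^2\,dx-\int_\z|x|^{\gamma-2}|\nabla u|^2\,dx$$
together with the analogous identity for $\int_\z|x|^{\gamma-2}(x\cdot\nabla u)\,\Delta u\,dx$ (obtained by writing it in index form and integrating by parts in the variable $x_j$), produces an exact identity of the form
\begin{align*}
\int_\z|x|^\gamma(\Delta^2 u)u\,dx&=\int_\z|x|^\gamma(\Delta u)^2\,dx-2\gamma\int_\z|x|^{\gamma-2}|\nabla u|^2\,dx\\
&\quad-2\gamma(\gamma-2)\int_\z|x|^{\gamma-4}(x\cdot\nabla u)^2\,dx+C(\gamma,N)\int_\z|x|^{\gamma-4}u^2\,dx,
\end{align*}
with $C(\gamma,N)=\tfrac{1}{2}\gamma(\gamma-2)(\gamma+N-2)(\gamma+N-4)$.

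The pointwise bound $(x\cdot\nabla u)^2\le|x|^2|\nabla u|^2$, together with the sign of $\gamma-2$, lets me merge the $(x\cdot\nabla u)^2$ term into the $|\nabla u|^2$ term at the cost of a single nonnegative coefficient $c_\gamma\ge0$. To handle the remaining $c_\gamma\int_\z|x|^{\gamma-2}|\nabla u|^2\,dx$, I would integrate by parts one more time, writing
$$\int_\z|x|^{\gamma-2}|\nabla u|^2\,dx=-\int_\z u\bigl[(\gamma-2)|x|^{\gamma-4}x\cdot\nabla u+|x|^{\gamma-2}\Delta u\bigr]dx,$$
and apply Cauchy-Schwarz with a small free parameter $\eta>0$ to obtain a weighted Hardy-Rellich inequality
$$\int_\z|x|^{\gamma-2}|\nabla u|^2\,dx\le\eta\int_\z|x|^\gamma(\Delta u)^2\,dx+M(\eta,\gamma)\int_\z|x|^{\gamma-4}u^2\,dx.$$
Choosing $\eta$ so small that $1-c_\gamma\eta\ge0$ renders the coefficient of $\int_\z|x|^\gamma(\Delta u)^2\,dx$ nonnegative, hence dropable, and leaves the required bound with $k=C(\gamma,N)-c_\gamma M(\eta,\gamma)\in\R$ (possibly negative).

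The main point to watch is the bookkeeping: the exact identity contains three terms with indefinite sign, and both the merging of the $(x\cdot\nabla u)^2$ term and the choice of $\eta$ must be organized so that the "good" term $\int_\z|x|^\gamma(\Delta u)^2\,dx$ survives with a nonnegative coefficient. The standing hypothesis $N\ge5$ is what ultimately guarantees that the Hardy-Rellich constants produced in this procedure are finite.
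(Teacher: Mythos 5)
Your proof is correct and follows essentially the same route as the paper: you arrive at the identical integration-by-parts identity, use the same pointwise bound $(x\cdot\nabla u)^2\le|x|^2|\nabla u|^2$, and then absorb the weighted gradient term into $\eta\int_\z|x|^\gamma(\Delta u)^2\,dx$ plus a multiple of $\int_\z|x|^{\gamma-4}u^2\,dx$, which is exactly the paper's completing-the-square step phrased as Cauchy--Schwarz with a small parameter. One cosmetic correction: your constants come solely from Cauchy--Schwarz and the divergence identity, so the hypothesis $N\geq5$ plays no role in this particular lemma (it is a standing assumption needed elsewhere, for Rellich's inequality, precisely because $k$ here may be negative), contrary to your closing remark.
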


\begin{proof}
Let us first compute the derivatives of $|x|^\gamma$
\begin{align*}
\nabla |x|^\gamma&=\gamma|x|^{\gamma-2}x\\
 D_{ij}|x|^\gamma&=\gamma(\gamma-2)|x|^{\gamma-4}x_{i}x_{j}+\gamma|x|^{\gamma-2}\delta_{ij}\\
\Delta|x|^\gamma&=\gamma(\gamma-2+N)|x|^{\gamma-2}=:c_1|x|^{\gamma-2}\\
\Delta^2|x|^\gamma&=\gamma(\gamma-2)(\gamma-2+N)(\gamma-4+N)|x|^{\gamma-4}=:c_2|x|^{\gamma-4}
\end{align*}
and observe that $c_1$ is positive and the sign of $c_2$ depends on whether   $\gamma$ is bigger than 2 or not.

 We  now want to estimate $\int_\z|x|^\gamma(\Delta^2 u)u\,dx$. Integrating by parts one has that
\begin{align*}
\int_\z|x|^\gamma(\Delta^2 u)u\,dx&=\int_\z |x|^\gamma (\Delta u)^2\,dx+2\int_\z \nabla |x|^\gamma\cdot \nabla {u}\Delta u\,dx+\int_\z\Delta|x|^\gamma{u}\,\Delta u\,dx.
\end{align*}

The second term of the right-hand side can be written as
\begin{align*}
 2\int_\z \Delta u \nabla u\cdot\nabla |x|^\gamma\,dx&=-2\int_\z \nabla u\cdot \nabla \left(\nabla u\cdot \nabla |x|^\gamma  \right)\,dx
	=-2\sum_{i,j=1}^{N}\int_\z D_{i}uD_{i}\left( D _{j}uD_{j}|x|^\gamma \right)\,dx\\
& =-2\sum_{i,j=1}^{N}\int_\z D_{i}uD_{ij} uD_{j}|x|^\gamma\,dx -2\sum_{i,j=1}^{N}\int_\z D_{i}u D _{j}uD_{ij}|x|^\gamma\,dx \\
&=-\sum_{i,j=1}^{N}\int_\z D_{j} \left(D_{i}u\right)^{2}D_{j}|x|^\gamma\,dx-2\int_\z (D^{2}|x|^\gamma)\nabla u\cdot \nabla u\,dx\\
&  =\int_\z |\nabla u|^{2}\Delta |x|^\gamma\,dx-2\int_\z (D^{2}|x|^\gamma)\nabla u\cdot \nabla u\,dx
\end{align*}
and since $\Delta u^{2}=2u\Delta u+2|\nabla u|^{2}$, the third term can be  written as
\begin{align*}
\int_\z u\Delta u\Delta |x|^\gamma\,dx&=\frac{1}{2}\int_\z \Delta u^{2}\Delta |x|^\gamma\,dx-\int_\z |\nabla u|^{2} \Delta |x|^\gamma\,dx\\
& = \frac{1}{2}\int_\z u^{2}\Delta^{2}|x|^\gamma\,dx-\int_\z |\nabla u|^{2} \Delta |x|^\gamma\,dx.
\end{align*}
Hence, one obtains
\begin{align}\label{stima}
\int_\z |x|^\gamma\left(\Delta^{2}u\right)u\,dx&=\int_\z|x|^\gamma \left( \Delta u \right)^{2}\,dx-2\int_\z (D^{2}|x|^\gamma)\nabla u\cdot \nabla u\,dx
	\nonumber\\&\quad+\frac{1}{2}\int_\z u^{2}\Delta^{2}|x|^\gamma\,dx.
\end{align}
Taking into account the  derivatives of $|x|^\gamma$ the equality \eqref{stima} reads as
\begin{align*}
\int_\z |x|^\gamma\left(\Delta^{2}u\right)u\,dx &=\int_\z |x|^\gamma\left( \Delta u \right)^{2}\,dx
-2 \gamma(\gamma-2)\int_\z      |x|^{\gamma-4} \sum_{i,j=1}^{N}x_{i}x_{j}D_{i}uD_{j}u\,dx\\&\quad
-2\gamma\int_\z  |x|^{\gamma-2}|\nabla u|^{2}\,dx 
	 +\frac{1}{2} c_{2}\int_\z |x|^{\gamma-4}u^{2}\,dx.
\end{align*}
We can estimate 
$$\sum_{i,j=1}^{N}x_{i}x_{j}D_{i}uD_{j}u=\left( \langle x,\nabla u\rangle  \right)^{2}\leq |x|^{2}|\nabla u|^2,$$
then $-2\gamma( \gamma-2)\left( \langle x,\nabla u\rangle  \right)^{2} \geq -2 \gamma|\gamma-2|  |x|^{2}|\nabla u|^2$.
Therefore, one obtains
\begin{align*}
\int_\z |x|^{\gamma}\left(\Delta^{2}u\right)u\,dx &\geq \int_\z |x|^{\gamma}\left( \Delta u \right)^{2}\,dx
	-2\gamma\left(|\gamma-2|+1\right)\int_\z  |x|^{ \gamma-2}|\nabla u|^{2}\,dx\\\nonumber
	&\quad+\frac{1}{2}c_{2}\int_\z |x|^{ \gamma-4}u^{2}\,dx.
\end{align*}
Since
\begin{align*}
\int_\z |x|^{\gamma-2}|\nabla u|^{2}dx&=-\int_\z u\,{\rm div}\left( |x|^{\gamma-2}\nabla u \right)dx
	\\&=-\int_\z |x|^{\gamma-2}u\Delta udx-\frac12(\gamma-2)\int_\z  |x|^{\gamma-4}x\cdot\nabla u^{2}dx \\
&\quad =-\int_\z |x|^{\gamma-2}u\Delta udx+\frac{\gamma-2}{2}\left( \gamma-4+N \right)\int_\z |x|^{\gamma-4}u^{2}dx,
\end{align*}
setting $c_3=2\gamma(|\gamma-2|+1)+1$, $c_4=c_3\frac{\gamma-2}{2}(\gamma-4+N)$, $c_5=2^{-\frac12}$ and considering two suitable constants $c_6,k\in\R$ one has
\begin{align}\label{stima1}
\int_\z |x|^{\gamma}\left(\Delta^{2}u\right)u\,dx &\geq  \frac12\int_\z |x|^\gamma(\Delta u)^2dx+\int|x|^{\gamma-2}|\nabla u|^2dx\nonumber\\&\quad+\frac12\int_\z |x|^\gamma(\Delta u)^2dx+  c_3\int_\z|x|^{\gamma-2}u\Delta udx+\left(\frac{c_2}{2}-c_4\right)\int_\z|x|^{\gamma-4}u^2dx\nonumber\\ 
&=\frac12\int_\z |x|^\gamma(\Delta u)^2dx+\int|x|^{\gamma-2}|\nabla u|^2dx\\&\quad+\int_{\z}\left(c_5|x|^{\frac{\gamma}{2}}\Delta u+  c_6|x|^{\frac{\gamma}{2}-2}u\right)^2dx+k\int_\z|x|^{\gamma-4}u^2dx\nonumber
\end{align}
where we have rearranged the terms for future convenience.

Hence,  
$
\int_\z |x|^{\gamma}\left(\Delta^{2}u\right)u\,dx  \geq   k\int_\z|x|^{\gamma-4}u^2dx.$
 
\end{proof}

\begin{proposition}\label{pr:accret}
 Let $N\geq5,\alpha>0,\beta>(\alpha-2)^+$. The form $\f$ is accretive. Moreover,  for every $u\in 
\ccc$ the following inequality holds
\begin{equation*}\label{eq:accretiv2}
\re\f(u,u)\geq 
\frac{1}{4}\|(1+|x|^{\alpha})\Delta u\|_{2}^{2}+\||x|^{\alpha-1}|\nabla u|\|_{2}^{2}+\| |x|^{\alpha-2}u\|_{2}^{2}
	+\frac{1}{2}\|Vu\|_{2}^{2}+\|u\|_{2}^{2}.
\end{equation*}

\end{proposition}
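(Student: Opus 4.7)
The plan is to exploit the identity that, for real $u\in\ccc$, the integration by parts leading to \eqref{intA} produces
\[
\f(u,u)=\int_\z u\,a^2\Delta^2 u\,dx+\int_\z V^2u^2\,dx+\lambda\|u\|_2^2;
\]
the complex-valued case then reduces to this by splitting $u=\re u+i\im u$ and invoking the sesquilinearity of $\f$. Expanding $a^2=1+2|x|^\alpha+|x|^{2\alpha}$ breaks the bi-Laplacian term into three pieces of the form $\int_\z|x|^\gamma u\,\Delta^2u\,dx$ with $\gamma\in\{0,\alpha,2\alpha\}$.

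To each piece I would apply the intermediate inequality \eqref{stima1} from the proof of Lemma \ref{lem:stimagamma}. Discarding the non-negative completed-square contribution, this yields, for $\gamma>0$,
\[
\int_\z|x|^\gamma u\,\Delta^2u\,dx\geq \tfrac12\int_\z|x|^\gamma(\Delta u)^2\,dx+\int_\z|x|^{\gamma-2}|\nabla u|^2\,dx+k_\gamma\int_\z|x|^{\gamma-4}u^2\,dx
\]
with $k_\gamma\in\R$ depending only on $\gamma$ and $N$ and \emph{not} necessarily positive, while for $\gamma=0$ one simply has $\int_\z u\,\Delta^2u\,dx=\int_\z(\Delta u)^2\,dx$. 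Summing with the weights $1,2,1$ produces
\[
\int(\Delta u)^2+\int|x|^\alpha(\Delta u)^2+\tfrac12\int|x|^{2\alpha}(\Delta u)^2+2\int|x|^{\alpha-2}|\nabla u|^2+\int|x|^{2\alpha-2}|\nabla u|^2,
\]
which already dominates $\tfrac14\|(1+|x|^\alpha)\Delta u\|_2^2+\||x|^{\alpha-1}|\nabla u|\|_2^2$ with a strictly positive reserve on the $(\Delta u)^2$ side.

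What remains is to absorb the residual $u^2$ contributions $2k_\alpha\int|x|^{\alpha-4}u^2+k_{2\alpha}\int|x|^{2\alpha-4}u^2$ and to produce the target $\||x|^{\alpha-2}u\|_2^2=\int|x|^{2\alpha-4}u^2$ together with $\tfrac12\|Vu\|_2^2$ and $\|u\|_2^2$. Here the hypothesis $\beta>(\alpha-2)^+$ is decisive: it forces $2\beta>2\alpha-4$, so that for any $\eps>0$ Young's inequality gives $|x|^{2\alpha-4}\leq\eps|x|^{2\beta}+C_\eps$ on $\{|x|\geq\delta\}$, with an analogous bound for $|x|^{\alpha-4}$. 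The unused half of $\int V^2u^2$ together with $\lambda\|u\|_2^2$ then absorbs the negative remainders, while the target $\||x|^{\alpha-2}u\|_2^2$ is supplied either by the remainder of $\int V^2u^2$ or by a fraction of $\int|x|^{2\alpha}(\Delta u)^2$ through the weighted bi-Rellich inequality built into \eqref{stima1}; this is why the coefficient $\tfrac14$, rather than the naive $\tfrac12$, appears in front of $\|(1+|x|^\alpha)\Delta u\|_2^2$. Choosing $\lambda_0$ large enough finally closes the inequality.

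The principal obstacle is this last absorption. In the regime $\alpha<2$ the weight $|x|^{2\alpha-4}$ is singular at the origin while $V^2=|x|^{2\beta}$ vanishes there, so Young's inequality alone does not suffice near $0$ and one has to call on the weighted bi-Rellich estimate from \eqref{stima1}; combined with the fact that the constants $k_\gamma$ from Lemma \ref{lem:stimagamma} may be negative, this makes the bookkeeping between the $(\Delta u)^2$, $V^2u^2$ and $\|u\|_2^2$ reserves somewhat delicate.
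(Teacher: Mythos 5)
Your decomposition of $\f(u,u)$ into the pieces $\gamma\in\{0,\alpha,2\alpha\}$, the use of the intermediate estimate \eqref{stima1}, and the absorption at infinity via $2\beta>2\alpha-4$ together with a large $\lambda_0$ all match the paper's argument. The genuine gap is the treatment near the origin. You correctly observe that for $\alpha<2$ the weights $|x|^{\alpha-4}$ and $|x|^{2\alpha-4}$ are singular at $0$ while $V^2=|x|^{2\beta}$ vanishes there, so Young's inequality and $\lambda\|u\|_2^2$ can neither absorb the possibly negative remainders $2k_\alpha\int_\z|x|^{\alpha-4}u^2\,dx$, $k_{2\alpha}\int_\z|x|^{2\alpha-4}u^2\,dx$ nor produce the target $\int_\z|x|^{2\alpha-4}u^2\,dx$ near $0$. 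But the tool you invoke at exactly this point, a ``weighted bi-Rellich inequality built into \eqref{stima1}'', is not there: \eqref{stima1} only yields the term $k\int_\z|x|^{\gamma-4}u^2\,dx$ with a constant $k$ which, as you yourself note, may be negative, so it provides no positive lower bound of the form $\int_\z|x|^{2\alpha}(\Delta u)^2\,dx\geq c\int_\z|x|^{2\alpha-4}u^2\,dx$. A genuine weighted Rellich inequality of that type would need its own proof or citation and comes with restrictions on $N$ and $\alpha$ (its sharp constant degenerates for certain exponents), none of which you address; tellingly, your argument never uses the hypothesis $N\geq5$, which must enter somewhere.

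The paper closes precisely this hole differently: after summing the three pieces it retains an unweighted reserve $\frac12\int_\z(\Delta u)^2\,dx$ (your bookkeeping in fact leaves $\frac34\int_\z(\Delta u)^2\,dx$ to spare) and applies the classical Rellich inequality, valid on $\ccc$ for $N\geq5$, to convert it into $c_0\int_\z|x|^{-4}u^2\,dx$ with $c_0>0$. Since $\alpha>0$, the weight $|x|^{-4}$ dominates $|x|^{\alpha-4}$ and $|x|^{2\alpha-4}$ near the origin, $\frac12|x|^{2\beta}$ dominates them at infinity because $2\beta>2\alpha-4>\alpha-4$, and a sufficiently large $\lambda_0$ takes care of the intermediate annulus; this simultaneously absorbs the negative remainders and leaves the wanted terms $|x|^{2\alpha-4}u^2+\frac12|x|^{2\beta}u^2+u^2$, as in \eqref{eq:accretivity}. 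If you replace your appeal to a ``bi-Rellich in \eqref{stima1}'' by this Rellich step on the leftover $\int_\z(\Delta u)^2\,dx$, your proof goes through and essentially coincides with the paper's.
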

\begin{proof}

If $u$ is a real function, by \eqref{intA} we can write
\begin{align*}
\f(u,u)&=\int_\z (1+|x|^\alpha)^2 (\Delta^2 u) u\,dx+
		\int_\z\left( V^{2}+\lambda \right)u^{2}dx\\
		&=\int_\z   (\Delta   u)^2\,dx+2\int_\z |x|^{\alpha} (\Delta^2 u) u\,dx+\int_\z |x|^{2\alpha} (\Delta^2 u) u\,dx+
		\int_\z\left( V^{2}+\lambda \right)u^{2}dx.
\end{align*}

By Lemma \ref{lem:stimagamma}   there exists a constant such that the term $2\int_\z |x|^{\alpha} (\Delta^2 u) u\,dx\geq k_1\int_\z|x|^{\alpha-4}u^2dx.$ Moreover,  one can estimate the term $\int_\z |x|^{2\alpha} (\Delta^2 u) u\,dx$ as in \eqref{stima1} to obtain that
\begin{align*}
\f(u,u)&\geq\frac12\int_\z   (\Delta   u)^2\,dx+\frac12\int_\z   (\Delta   u)^2\,dx+k_1\int_\z|x|^{\alpha-4}u^2dx+\frac12\int_\z|x|^{2\alpha}(\Delta u)^2dx\\
     &\quad +\int_\z|x|^{2\alpha-2}|\nabla  u|^2dx+k_2\int_\z|x|^{2\alpha-4}  u^2dx +\int_\z\left( |x|^{2\beta}+\lambda \right)u^{2}dx    \\
&\geq\int_\z \left[\frac{1}{4}(1+|x|^{\alpha})^{2}(\Delta u)^{2}
	+|x|^{2\alpha-2}|\nabla u|^{2}+\frac{1}{2}(\Delta u)^{2}\right]dx\nonumber\\
&\qquad +  \int_\z \left[k_1   |x|^{\alpha-4}+k_2|x|^{2\alpha-4}  + |x|^{2\beta}+\lambda \right]u^{2}dx.
\end{align*}

Finally, by Rellich's inequality, there exists a positive $c_0$ such that
\begin{align}\label{eq:accretivity}
 \f(u,u)&\geq  	 \int_\z \left[\frac{1}{4}(1+|x|^{\alpha})^{2}(\Delta u)^{2}
	+|x|^{2\alpha-2}|\nabla u|^{2}\right]dx\nonumber\\
&\qquad+	\int_\z \left(\frac{c_{0}}{|x|^{4}}+k_{1} |x|^{\alpha-4}+k_{2} |x|^{2\alpha-4}+|x|^{2\beta}+\lambda\right)u^{2}dx\nonumber\\
&\quad = \int_\z \left[\frac{1}{4}(1+|x|^{\alpha})^{2}(\Delta u)^{2}
	+|x|^{2\alpha-2}|\nabla u|^{2}+|x|^{2\alpha-4}u^{2}+\frac{1}{2}|x|^{2\beta}u^{2}+u^{2}\right]dx\nonumber\\
&\qquad+	\int_\z \left(\frac{c_{0}}{|x|^{4}}+k_{1} |x|^{\alpha-4}+(k_{2}-1) |x|^{2\alpha-4}
		+\frac{1}{2}|x|^{2\beta}+(\lambda-1)\right)u^{2}dx\nonumber\\
&\quad \geq \int_\z\left[ \frac{1}{4}(1+|x|^{\alpha})^{2}(\Delta u)^{2}
	+|x|^{2\alpha-2}|\nabla u|^{2}+|x|^{2\alpha-4}u^{2}+\frac{1}{2}|x|^{2\beta}u^{2}+u^{2}\right]dx		\geq 0
\end{align}
for some $\lambda\geq\lambda_{0}$ since  the growth at infinity and in 0 is led by the terms $\frac12|x|^{2\beta}$ and $c_0|x|^{-4}$, respectively, by the relation $2\beta>2\alpha-4>\alpha-4>-4$.

If $u$ is complex valued then one can write $u=\re u+i\im u$ and in view of $\re \f(u,u)=\f(\re u,\re u)+\f(\im u,\im u)$  the thesis follows.
\end{proof}

Since the form $\f$ is accretive on $\ccc$ 
 we can associate to $\f$ the norm
\[
\|u\|_{\f}^{2}=\re\f(u,u)+\|u\|^{2}_{2}.
\]
By Proposition \ref{pr:accret}  it follows that
\begin{equation}\label{eq:accretiv1}
\|u\|_{D}\leq C\|u\|_{\f}\text{ for all } u \in 
\ccc.
\end{equation}

We are now ready to study the properties of the form $\f$.

\begin{proposition}
 Let $N\geq5,\alpha>0,\beta>(\alpha-2)^+$. The form $\f$ si densely defined, accretive and continuous on 
$\ccc$.
\end{proposition}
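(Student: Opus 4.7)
The plan is to verify the three properties separately. Accretivity on $\ccc$ has already been established in Proposition~\ref{pr:accret}, so only density and continuity require further work.

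For the density of $\ccc$ in $L^2(\z)$, I would use a standard two-step truncation. Given $f \in L^2(\z)$, first approximate it in $L^2$ by some $g \in \cc$ via convolution with a mollifier; then multiply $g$ by a smooth radial cutoff $\eta_\eps(x) = \eta(x/\eps)$, where $\eta$ vanishes on $B_1$ and equals $1$ outside $B_2$. Since $g$ is bounded and the support of $1-\eta_\eps$ intersected with the support of $g$ has measure $O(\eps^N)$, the truncation error in $L^2$ is $O(\eps^{N/2})$ and tends to zero. This works a fortiori for $N\geq 5$.

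For continuity, I would expand $\f(u,v)$ according to \eqref{eq:form1} and apply the Cauchy--Schwarz inequality termwise, choosing the factorization of each weight so that the two factors land in distinct blocks of the norm $\|\cdot\|_D$ from \eqref{normaD}. The leading term is bounded by $\|(1+|x|^\alpha)\Delta u\|_2\|(1+|x|^\alpha)\Delta v\|_2$; the first-order mixed term is bounded by $4\alpha\|(1+|x|^\alpha)\Delta u\|_2\||x|^{\alpha-1}\nabla v\|_2$ after using $|x\cdot\nabla\bar v|\leq |x||\nabla v|$ to combine $|x|^{\alpha-2}\cdot|x|$ into $|x|^{\alpha-1}$; the two zero-order pieces inside the square bracket are controlled by splitting $|x|^{2\alpha-2}=|x|^\alpha\cdot|x|^{\alpha-2}$ and pairing $\Delta u$ with $v$, using also the trivial bound $\|\Delta u\|_2\leq \|(1+|x|^\alpha)\Delta u\|_2$ to absorb the $|x|^{\alpha-2}$ weight; finally the potential term gives $\|Vu\|_2\|Vv\|_2$ and the $\lambda$-piece gives $\lambda\|u\|_2\|v\|_2$. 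Collecting all contributions yields $|\f(u,v)|\leq C\|u\|_D\|v\|_D$, and \eqref{eq:accretiv1} upgrades this to $|\f(u,v)|\leq C'\|u\|_\f\|v\|_\f$, which is continuity.

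The one step requiring a little care is the zero-order mixed contribution carrying the weight $|x|^{\alpha-2}$, which is singular at the origin when $\alpha<2$; the point is that this is precisely the weight appearing in the block $\||x|^{\alpha-2}v\|_2$ of $\|\cdot\|_D$, so the norm has been tailored exactly for this purpose. Beyond such weight bookkeeping, no substantive analytic obstacle is anticipated: every estimate reduces to a direct application of Cauchy--Schwarz, and the whole argument rests on the preparatory inequality \eqref{eq:accretiv1} already proved via Proposition~\ref{pr:accret}.
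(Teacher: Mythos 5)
Your proposal is correct and follows essentially the same route as the paper: accretivity is quoted from Proposition~\ref{pr:accret}, and continuity is obtained exactly as in \eqref{eq:continuity} by expanding \eqref{eq:form1}, splitting the weights ($|x|^{2\alpha-2}=|x|^{\alpha}\cdot|x|^{\alpha-2}$, $|x\cdot\nabla \overline v|\le |x||\nabla v|$), applying Cauchy--Schwarz termwise against the blocks of $\|\cdot\|_D$ in \eqref{normaD}, and then invoking \eqref{eq:accretiv1}. Your mollification-plus-cutoff argument for the density of $\ccc$ in $L^2(\z)$ simply spells out a standard fact the paper treats as immediate, so there is no substantive difference.
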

\begin{proof}  $\ccc\subset D$ then $\f$ is densely defined. The accretivity is stated in \eqref{eq:accretivity}.
As regards the continuity we observe that from \eqref{eq:form1} we have 
\begin{align}\label{eq:continuity}
|\f(u,v)|&\leq \|(1+|x|^{\alpha})\Delta u\|_{2}\|(1+|x|^{\alpha})\Delta  v\|_{2} 
	+4\alpha\|(1+|x|^{\alpha})\Delta u\|_{2} \||x|^{\alpha-1}\nabla  v\|_2\nonumber \\
& \quad +2\alpha(2\alpha-2+N)\| |x|^{\alpha}\Delta u\|_{2}\||x|^{\alpha-2}v\|_{2}\nonumber \\
&\quad  +2\alpha(\alpha-2+N)\|\Delta u\|_{2}\||x|^{\alpha-2}v\|_{2}+\|Vu\|_{2}\|Vv\|_{2}+\lambda \|u\|_{2}\|v\|_{2}\nonumber \\
& \leq C\|u\|_{D}\|v\|_{D}.
\end{align}
Then, by \eqref{eq:accretiv1}  
\begin{equation*}\label{eq:stima-auv}
|\f(u,v)|\leq C\|u\|_{\f}\|v\|_{\f}.
\end{equation*}
\end{proof}

Combining \eqref{eq:accretiv1}   and \eqref{eq:continuity}  we can deduce that the norms $\|\cdot\|_{\f}$ and $\|\cdot\|_{D}$ are equivalent on 
$\ccc$.

Now we prove the closability of the form $\f$.

\begin{proposition}
 Let $N\geq5,\alpha>0,\beta>(\alpha-2)^+$. The form $\f$ is closable on 
$\ccc$.
\end{proposition}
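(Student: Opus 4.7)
The plan is to use the standard criterion for closability: $\mathfrak{a}_\lambda$ is closable on $\ccc$ if for every sequence $(u_n)\subset\ccc$ with $u_n\to 0$ in $L^2(\z)$ and $(u_n)$ Cauchy with respect to $\|\cdot\|_{\f}$, one has $\mathfrak{a}_\lambda(u_n,u_n)\to 0$. By the equivalence of the norms $\|\cdot\|_{\f}$ and $\|\cdot\|_D$ on $\ccc$ established via \eqref{eq:accretiv1} and \eqref{eq:continuity}, it suffices to show that such a sequence satisfies $\|u_n\|_D\to 0$.

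Since $(u_n)$ is Cauchy in the $D$-norm, each of the four sequences
\[
(1+|x|^\alpha)\Delta u_n,\quad |x|^{\alpha-1}\nabla u_n,\quad |x|^{\alpha-2}u_n,\quad Vu_n
\]
is Cauchy in $L^2(\z)$ and therefore converges to some limit $f_1,f_2,f_3,f_4\in L^2(\z)$. The strategy is to identify each limit as $0$ by testing against functions $\varphi\in\ccc$. For every such $\varphi$, the weights $|x|^{\alpha-2}$, $|x|^{\alpha-1}$, $(1+|x|^\alpha)$ and $V$ are smooth and bounded on $\supp\varphi$, so $|x|^{\alpha-2}\varphi$, $V\varphi$, and the relevant derivatives of $(1+|x|^\alpha)\varphi$ lie in $L^2(\z)$. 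Integration by parts (twice for the Laplacian term, once for the gradient term) together with $u_n\to 0$ in $L^2(\z)$ then gives, for instance,
\[
\int_\z f_1\,\varphi\,dx=\lim_{n\to\infty}\int_\z(1+|x|^\alpha)\Delta u_n\,\varphi\,dx=\lim_{n\to\infty}\int_\z u_n\,\Delta\bigl((1+|x|^\alpha)\varphi\bigr)\,dx=0,
\]
and analogous computations for $f_2,f_3,f_4$. Since $\ccc$ is dense in $L^2(\z)$, each $f_i$ vanishes almost everywhere, hence $\|u_n\|_D\to 0$ as desired.

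The main obstacle, and the reason why the form is defined on $\ccc$ rather than on $\cc$, is that the weights $|x|^{\alpha-2}$ and $|x|^{\alpha-1}$ may be singular at the origin when $\alpha<2$; allowing test functions supported away from $0$ guarantees that the multiplication and integration-by-parts arguments produce admissible $L^2$-functions on $\supp\varphi$ without any boundary contribution at the origin. Granted these identifications, closability follows, and the closure $(\f,D(\f))$ has domain precisely $D$ with $\|\cdot\|_{D(\f)}$ equivalent to $\|\cdot\|_D$.
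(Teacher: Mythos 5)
Your proof is correct, but it identifies the limits by a genuinely different mechanism than the paper. You both reduce, via the equivalence of $\|\cdot\|_{\f}$ and $\|\cdot\|_D$ on $\ccc$, to showing that a form-Cauchy sequence with $u_n\to 0$ in $L^2$ satisfies $\|u_n\|_D\to 0$, and you both note that the weighted quantities $(1+|x|^\alpha)\Delta u_n$, $|x|^{\alpha-1}\nabla u_n$, $|x|^{\alpha-2}u_n$, $Vu_n$ are $L^2$-Cauchy. The paper then first upgrades $u_n\to 0$ in $L^2$ to convergence in $H^2$ via the interpolation inequality $\|\nabla u\|_2\leq C\|\Delta u\|_2\|u\|_2$, and identifies the $L^2$-limits of the weighted sequences as $0$ through almost-everywhere convergence (along subsequences) combined with the Cauchy property. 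You instead identify the limits distributionally: you integrate against $\varphi\in\ccc$, move the derivatives onto $(1+|x|^\alpha)\varphi$ (which is smooth and compactly supported away from the origin, so the integration by parts is clean), use only $u_n\to 0$ in $L^2$, and invoke density of $\ccc$ in $L^2(\z)$. This is the classical closability criterion ``the candidate limits vanish in the sense of distributions,'' it avoids both the interpolation step and the a.e.\ subsequence argument, and it makes transparent why supporting test functions away from $0$ suffices; the paper's route yields, as a by-product, $H^2$-convergence of $u_n$, which is not needed here. One caveat: your closing assertion that the domain of the closure is precisely $D$ does not follow from this argument alone — it requires the density of $\ccc$ in $(D,\|\cdot\|_D)$, which the paper proves separately (Proposition \ref{pr:core-for-a}) — but that claim is not part of the statement being proved, so the proof of closability itself stands.
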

\begin{proof}
Let $(u_n)\in 
\ccc$ be such that $u_{n}\to 0$ in $L^{2}(\rn)$ and $\|u_n-u_m\|_{\mathfrak{a}_\lambda}\to 0$ as $n,m\to \infty$.
By estimate \eqref{eq:accretiv1} it follows that $(u_n)$ and $(\Delta u_{n})$ are   Cauchy sequences in
  $L^{2}(\R^{N})$. Using the interpolation inequality
\[
\|\nabla u\|_{2}\leq C\|\Delta u\|_{2}\|u\|_{2}
\]
it follows  that $(u_{n})$ is a Cauchy sequence in 
 $H^2(\R^N)$ and then it converges to $0$ in  $H^2(\R^N)$.

Taking into account the expression  \eqref{normaD} for the norm $\|\cdot\|_D$, we have that  
$w_{n}=(1+|x|^{\alpha})\Delta u_{n}$ and $v_n=|x|^{\alpha-1}\nabla u_{n}$ are 
Cauchy sequences in $L^2(\R^N)$.
Since they converge to $0$ a.e. they also converges to $0$ in $L^2(\R^N)$ and one has

 $$\int_{\R^N} (1+|x|^\alpha)|\Delta u_n|^2\,dx\to 0\text{ and }
 \int_{\R^N} |x|^{\alpha-1}|\nabla u_n|^2\,dx\to 0$$ as $n\to \infty$.
Moreover  \eqref{normaD}  gives also that 

$$\int_{\R^N} |x|^{2\alpha-4}|u_n|^2\,dx\to 0$$
and 
$$\int_{\R^N} |x|^{2\beta}|u_n|^2\,dx\to 0$$ as $n\to \infty$.

Then we have proved that $\|u_{n}\|_{D}\to 0$ as $n\to \infty$.
Since by \eqref{eq:continuity} it follows that
\[
|\f(u_{n},u_{n})|\leq C\|u_{n}\|_{D}^{2},
\]
we have that $\f(u_{n},u_{n})\to 0$ as $n\to \infty$ and then $\f$ is closable.
\end{proof}

Next proposition gives a characterization of the domain of the closure of the form.

\begin{proposition}\label{pr:core-for-a}
 Let $N\geq5,\alpha>0,\beta>(\alpha-2)^+$. The domain of the closure of $\f$ coincides with the weighted Sobolev  space
\[
D=\{u\in H^2(\z)\,:\,(1+|x|^\alpha)\Delta u,\,|x|^{\alpha-1}\nabla u,\,|x|^{\alpha-2}u, Vu \in L^2(\z)\}.
\]
\end{proposition}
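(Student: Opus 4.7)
The plan is to prove the two inclusions $D(\overline{\f})\subseteq D$ and $D\subseteq D(\overline{\f})$. The first is immediate from the equivalence of norms $\|\cdot\|_\f\sim\|\cdot\|_D$ on $\ccc$ (which combines \eqref{eq:accretiv1} and \eqref{eq:continuity}) together with completeness of $(D,\|\cdot\|_D)$: a sequence in $\ccc$ that is $L^2$-convergent and $\|\cdot\|_\f$-Cauchy is automatically $\|\cdot\|_D$-Cauchy, so it converges in $D$, and the two limits coincide almost everywhere.

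For the converse, the strategy is to prove that $\ccc$ is dense in $(D,\|\cdot\|_D)$ through a three-step approximation of an arbitrary $u\in D$: a cutoff at infinity, a cutoff near the origin, and a final mollification. Fix a smooth $\chi$ on $\R$ with $\chi\equiv1$ on $[0,1]$ and $\chi\equiv0$ on $[2,\infty)$ and put $\chi_R(x)=\chi(|x|/R)$. The ``pointwise'' error $(\chi_R-1)u$ kills each summand of $\|\cdot\|_D$ by dominated convergence, while the derivative terms $\nabla\chi_R,\Delta\chi_R$ live on $\{R\leq|x|\leq 2R\}$ where $(1+|x|^\alpha)/R\lesssim|x|^{\alpha-1}$ and $(1+|x|^\alpha)/R^2\lesssim|x|^{\alpha-2}$; the resulting integrands are bounded by tails of the integrals defining $\|u\|_D$ and vanish as $R\to\infty$.

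The main obstacle, and the reason the assumption $N\geq 5$ is needed, is the cutoff near the origin. Setting $\eta_n(x)=\eta(n|x|)$ with $\eta\equiv0$ on $[0,1]$ and $\eta\equiv1$ on $[2,\infty)$, the supports of $\nabla\eta_n$ and $\Delta\eta_n$ are contained in the annulus $\{1/n\leq|x|\leq 2/n\}$, on which $|\nabla\eta_n|\lesssim n\sim|x|^{-1}$ and $|\Delta\eta_n|\lesssim n^2\sim|x|^{-2}$. A Leibniz expansion yields error integrals controlled by $\int_{|x|\leq 2/n}|x|^{-4}u^2\,dx$ and $\int_{|x|\leq 2/n}|x|^{-2}|\nabla u|^2\,dx$. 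Since $u\in D\subset H^2(\R^N)$ and $N\geq 5$, Rellich's inequality gives $|x|^{-2}u\in L^2(\R^N)$, and a short integration by parts (absorbing an $\varepsilon\int|x|^{-2}|\nabla u|^2$) then yields $|x|^{-1}\nabla u\in L^2(\R^N)$, so both tails vanish by dominated convergence. The analogous weighted terms carrying the factors $|x|^{2\alpha-4}u^2$, $|x|^{2\alpha-2}|\nabla u|^2$ and $V^2u^2$ are handled by the integrability built into the definition of $D$; the factor $(1+|x|^\alpha)$ is bounded near the origin and plays no role.

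After both cutoffs, $\chi_R\eta_n u$ lies in $H^2(\R^N)$ with compact support in $\R^N\setminus\{0\}$, where every weight appearing in $\|\cdot\|_D$ is a bounded smooth function. Convolution with a standard mollifier $\rho_\delta$, with $\delta$ small enough to preserve the support in $\R^N\setminus\{0\}$, produces elements of $\ccc$ that converge in $H^2$, hence in $\|\cdot\|_D$, to $\chi_R\eta_n u$. A diagonal extraction then yields a sequence in $\ccc$ converging to $u$ in $\|\cdot\|_D$, equivalently in $\|\cdot\|_\f$, so $u\in D(\overline{\f})$.
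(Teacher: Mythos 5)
Your proposal is correct and follows essentially the same route as the paper: reduce to density of $\ccc$ in $(D,\|\cdot\|_D)$ via the norm equivalence, approximate $u\in D$ by cutoffs (the paper uses a single cutoff $\varphi_n$ handling origin and infinity at once, with $|\nabla\varphi_n|\leq C|x|^{-1}$, $|D^2\varphi_n|\leq C|x|^{-2}$, and leaves the final mollification implicit), and control the error terms near the origin by Hardy/Rellich-type bounds, which is exactly where $N\geq 5$ enters. Your only deviation is obtaining $|x|^{-1}\nabla u\in L^2$ from Rellich plus an integration by parts rather than quoting Hardy's inequality for the derivatives, a harmless variant.
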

\begin{proof}
We recall that
$D$ is a Banach space with respect to the norm $\|\cdot \|_{D}$.
Since 
$\ccc\subset D$  and since the norm $\|\cdot \|_{D}$ is equivalent to the form norm 
$\|\cdot \|_{\f}$ on 
$\ccc$,
we have just to prove that 
$\ccc$ is dense in $D$ with respect the norm $\|\cdot \|_{D}.$

It is enough to prove that the set of functions in $H^2(\R^N)$ with compact support contained in 
$\rn\setminus\{0\}$ is dense in $D$.
Take $u\in D$ and consider $u_n=u\varphi _n$,
where $\varphi_n\in 
\ccc$ is such that 

\begin{equation*}  
\left\{
\begin{array}{ll}
\varphi_n=0  \  \text{in} \   B({\frac1n})\cup B^c({2n}), \\
\varphi_n=1 \  \text{in} \  B(n)\setminus B({\frac2n)}, \\
0\leq \varphi_n\leq 1,\\
|\nabla \varphi_n(x)|\leq C\frac{1}{|x|},\\
|D^{2} \varphi_n(x)|\leq C\frac{1}{|x|^{2}}.\\
\end{array}\right.
\end{equation*}  
Observe that such a function exists. Indeed, let $\varphi \in C^{\infty}([0,+\infty))$ such that $0\leq \varphi(t)\leq 1$, $\varphi(t)=1$ for $0\leq t\leq 1$,
$\varphi(t)=0$ if $t\geq 2$.  Then the function
\begin{equation*}  
\varphi_{n}(x)=
\begin{cases}
1-\varphi(n|x|)  &  \text{in} \   B({\frac2n}), \\
1 &  \text{in} \  B(n)\setminus B({\frac2n)}, \\
\varphi \left( \frac{|x|}{n} \right) &  \text{in} \   B^{c}(n) \\
\end{cases} 
\end{equation*}   
satisfies the desired properties.


We show now that $u_n\to u$ with respect to the norm  $\| \cdot \|_{D}$.
Indeed, $u_n|x|^{\beta}\to u|x|^{\beta}$ and $u_n|x|^{\alpha-2}\to u|x|^{\alpha-2}$ in $L^2(\R^N)$ by dominated convergence.

As regards the first order term we observe that

\begin{eqnarray*}
|x|^{\alpha-1}|\nabla u_n-\nabla u| &=&
|x|^{\alpha-1}\left|(\varphi_n -1)\nabla u + u\nabla \varphi_n\right|\\
&\le & |x|^{\alpha-1}(1-\varphi_n )|\nabla u|+ |x|^{\alpha-1}|u||\nabla \varphi_n|.
\end{eqnarray*}
Therefore, it suffices to prove that the second term of the right-hand side of the above estimate converges to 0 in $L^2(\R^N)$. Indeed it converges a.e. to $0$,
and since
\begin{align*}
 |x|^{\alpha-1}|u|  |\nabla \varphi_n|\leq
 C |x|^{\alpha-2}|u|\chi_{K_n}\in L^{2}(\rn)
\end{align*}
where $K_n=B({\frac2n)}\setminus B({\frac1n)}\cup B({2n})\setminus B(n)$, the convergence is
in $L^{2}(\rn)$.

As regards the diffusion term we can argue in a similar way, since
\begin{align*}
&(1+|x|^{\alpha})|\Delta u_{n}-\Delta u|
	= (1+|x|^{\alpha})|\varphi_{n}\Delta u+2\nabla \varphi_{n}\cdot \nabla u+u\Delta \varphi_{n}-\Delta u| \\
&\qquad \leq \left( 1+|x|^{\alpha} \right)(1-\varphi_{n})|\Delta u|+2(1+|x|^{\alpha})|\nabla u||\nabla \varphi_{n}|
			+(1+|x|^{\alpha})|u||\Delta \varphi_{n}|,
\end{align*}
and 
\[
2(1+|x|^{\alpha})|\nabla u||\nabla \varphi_{n}|\leq C\left( \frac{1}{|x|}|\nabla u|+|x|^{\alpha-1}|\nabla u| \right)\chi_{K_n}
\in L^{2}(\rn)
\]
by Hardy's inequality, and 
\[
(1+|x|^{\alpha})|u||\Delta \varphi_{n}|\leq C\left( \frac{1}{|x|^{2}}| u|+|x|^{\alpha-2}|u| \right)\chi_{K_n}
\in L^{2}(\rn)
\]
by Rellich's inequality.
\end{proof}

We observe that $C_{c}^{\infty}(\rn)\subset D$ and therefore we have that also $C_{c}^{\infty}(\rn)$ is a core for 
$\f$.

The form $\f$ defined in \eqref{eq:form1}  is densely defined, accretive, continuous and closable. Therefore, its closure $\overline\f$ is associated to a closed  operator $(A_\lambda,D(A_\lambda))$ on $L^2(\R^N)$ defined by

\begin{align*}
D(A_\lambda):&=\{u\in D\,:\,\exists\,v\in L^2(\R^N)\,s.t.\,\,\overline\f(u,h)=\langle v,h\rangle ,\,\forall\,h\in D\}\\ A_\lambda u:&=v.
\end{align*}

Taking into consideration the properties of the form $\f$ and \cite[Proposition 1.51 and Theorem 1.52]{ouhabaz}, we obtain the following generation theorem on $L^2(\R^N)$.  

\begin{theorem}\label{thmform}
 Let $N\geq5,\alpha>0,\beta>(\alpha-2)^+$.
The operator $(-A_\lambda, D(A_\lambda))$ generates  a strongly continuous analytic contraction semigroup on $L^2(\R^N)$.
\end{theorem}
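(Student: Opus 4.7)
The plan is to invoke the standard form-method machinery: once we have a densely defined, accretive, continuous, closable sesquilinear form on $L^2(\rn)$, the theory (as packaged in Ouhabaz, Proposition 1.51 and Theorem 1.52) automatically produces a closed $m$-sectorial operator whose negative generates a strongly continuous analytic contraction semigroup on $L^2(\rn)$. All four of these properties of $\f$ have already been proved in the preceding propositions, so the work for this theorem is essentially to check that the hypotheses of the Ouhabaz statements are met and to collect the conclusions.

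The steps, in order, would be the following. First, I would recall that by the closability proposition, the closure $\overline{\f}$ is a closed sesquilinear form, and by the core characterization its domain is exactly the weighted Sobolev space $D$ endowed with $\|\cdot\|_D$ (which is equivalent to $\|\cdot\|_{\f}$ on $\ccc$, hence on $D$ by density). Second, I would verify sectoriality: the accretivity estimate
\[
\re\overline{\f}(u,u)\geq \tfrac14\|(1+|x|^\alpha)\Delta u\|_2^2+\||x|^{\alpha-1}\nabla u\|_2^2+\||x|^{\alpha-2}u\|_2^2+\tfrac12\|Vu\|_2^2+\|u\|_2^2
\]
together with the continuity bound $|\overline{\f}(u,v)|\leq C\|u\|_D\|v\|_D\leq C'(\re\overline{\f}(u,u)+\|u\|_2^2)^{1/2}(\re\overline{\f}(v,v)+\|v\|_2^2)^{1/2}$ shows that $\overline{\f}$ satisfies a sectorial estimate of the form $|\im\overline{\f}(u,u)|\leq M\,\re\overline{\f}(u,u)$ for all $u\in D$. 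Third, applying Proposition~1.51 of Ouhabaz, I would associate to $\overline{\f}$ the operator $(A_\lambda, D(A_\lambda))$ defined by $D(A_\lambda)=\{u\in D:\exists v\in L^2(\rn),\ \overline{\f}(u,h)=\langle v,h\rangle\ \forall h\in D\}$ and $A_\lambda u=v$, which is $m$-sectorial and closed.

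Finally, Theorem~1.52 of Ouhabaz tells us that $-A_\lambda$ generates a strongly continuous semigroup which is analytic in a sector whose opening depends only on the sectoriality angle, and which is contractive because $\overline{\f}$ is accretive (so the numerical range of $A_\lambda$ lies in $\{\re z\geq 0\}$, giving resolvent estimates $\|(A_\lambda+\mu)^{-1}\|\leq 1/\mu$ for $\mu>0$). Putting these together yields the theorem. I do not anticipate a genuine obstacle here: the substantive analytical work (the pointwise weighted estimates via Lemma~\ref{lem:stimagamma} and Rellich's inequality, the density/closability argument, and the continuity bound) has already been carried out; this theorem is the harvesting step. The only small point to be careful about is tracking the constant $\lambda\geq\lambda_0$ so that the lower bound on $\re\overline{\f}$ is genuinely non-negative and the semigroup is truly contractive rather than merely quasi-contractive.
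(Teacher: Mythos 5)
Your proposal is correct and follows exactly the paper's route: the paper likewise deduces the theorem directly from the established properties of $\f$ (densely defined, accretive, continuous, closable) via Ouhabaz's Proposition 1.51 and Theorem 1.52, with the sectoriality coming from the continuity and accretivity estimates just as you describe. Your write-up simply makes explicit the verification steps that the paper leaves implicit.
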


Since for $u\in\ccc$ by \eqref{intA} one has that
\[\f(u,v)=\int_\z(Au+\lambda u)\overline{v}\,dx,\qquad\textrm{for every}\ v\in\ccc\]
and $\ccc$ is a core for $\overline\f$, the equality holds for every $v\in D$. Then,  the operator $(-A_\lambda, D(A_\lambda))$ is an extension of $(-A,\ccc)$, more precisely, $Au+\lambda u=A_\lambda u$.
Therefore, there is $(-A,D(A))$, an extension of $(-A,\ccc)$, that generates the analytic $C_0$-semigroup $e^{-tA}=e^{\lambda t}e^{-t A_\lambda}$ in $L^2(\z)$. In the next section we aim at characterizing the domain $D(A)$.

\section{Domain Characterization}\label{domain}
In this section we aim at characterizing $D(A)$ for suitable values of  the dimension $N$.
By definition
\begin{align*}
&D(A)
	=\{u\in D \,:\,\exists\,v\in L^2(\R^N)\,s.t.\,\,\overline\f(u,h)=\langle v,h\rangle ,\,
		\forall\,h\in D \}\\
&	 Au+\lambda u=v.
\end{align*}
where
\[
D=\{u\in H^2(\z)\,:\,(1+|x|^\alpha)\Delta u,\,|x|^{\alpha-1}\nabla u,\,|x|^{\alpha-2}u, Vu \in L^2(\z)\}.
\]

Let $u\in D(A)$. Since $u\in H^{2}(\R^{N})$ one can integrate by parts and obtain  
\[
\int_\z Auv\,dx=\int_\z uA^{*}v\,dx
\]
for every $v \in C_{c}^{\infty}(\R^{N}\setminus\{0\})$ where
\begin{align*}
& A^{*}v=a^{2}\Delta^{2}v + 4\nabla a^{2}\cdot \nabla \Delta v+2\Delta a^{2}\Delta v + 4 Tr D^{2}a^{2}D^{2}v \\
&\qquad +4\nabla \Delta a^{2}\cdot \nabla v + \Delta^{2}a^{2}v +V^{2}v.
\end{align*}
By local elliptic regularity, see \cite{agm}, since the coefficients of $A$ are bounded in every compact set contained in
$\R^{N}\setminus\{0\}$ we have that $u \in H^{4}_{\rm loc}(\R^{N}\setminus\{0\}).$

Now let $f=Au$, then $f\in L^{2}(\R^{N})$ and dividing by $a^{2}$ one obtains
\[
 \Delta  ^{2}u+\left( \frac{V}{a} \right)^{2}u=\frac{f}{a^2}\in L^{2}(\R^{N}).
\]
Thus, consider the Schr\"odinger operator $\tilde A= \Delta   ^{2}u+\tilde V^2 u$ where
$\tilde V= \frac{V}{a}$.
In \cite{sugano} the author established maximal estimates   for $\tilde A$, assuming that the potential  $\tilde V$   belongs to the reverse H\"older class $B_q$ for some $q\geq \frac{N}{2}$.
We recall that 
a nonnegative locally $L^q$-integrable function $\tilde V$ on $\R^N$ is
said to be in $B_q,\,1 < q < \infty$, if there exists $C > 0$ such that the reverse
H\"older inequality
\[
\left( \frac{1}{|B|}\int_B\tilde V^q(x) dx \right) ^{1/q}\leq C\left( \frac{1}{|B|}\int_B\tilde V(x) dx \right)
\]
holds for every $x \in \mathbb{R}^N$ and for every ball $B$ in $\R^N$.

  In the same way as in \cite[Section 3]{boutiah-et-al1} one verifies that 
\begin{align*}\label{eq:stime-v}
C_1(1+|x|^{\beta-\alpha}) & \leq \tilde V \leq C_2 ({1+|x|^{\beta-\alpha}})\quad\text{ if }\beta\geq \alpha ,\\
C_3\frac{1}{1+|x|^{\alpha-\beta}} &\leq \tilde V \leq C_4 \frac{1}{1+|x|^{\alpha-\beta}}\quad\text{ if }\alpha-2<\beta<\alpha, \nonumber
\end{align*}
for some positive constants $C_1$, $C_2$, $C_3$, $C_4$. Then it   can be proved that
$\tilde V\in B_q$ if $\beta -\alpha>-\frac{N}{q}$. In our case, since $\beta>\alpha-2$, the potential $\tilde V\in B_{\frac{N}{2}}$.

The following theorem was proved by Sugano \cite[Theorem 1]{sugano}.
\begin{theorem}
Let $H=(-\Delta)^2+V^2, f\in L^p(\z)$ and $j=0,1,2,3$. Suppose $V\in B_{\frac{N}{2}}$ and there exists a constant $C$ such that $V(x)\leq C m(x,V)^2$, then there exist constants $C_j$ such that 
\[ ||V^{2-\frac{j}{2}}\nabla^j H^{-1}f||_{p}\leq C_j||f||_{p}\quad {\rm for}\  1< p\leq\infty,\]
where $\frac{1}{m(x,V)}=\sup\left\{r>0:\frac{r^2}{|B_r(x)|}\int_{B_r(x)}V(y)\,dy\leq1\right\}$. Moreover, there exists $C'$ such that
\[||\nabla^4 H^{-1}f||_{p}\leq C'||f||_{p}\]
for $1<p<\infty$.
\end{theorem}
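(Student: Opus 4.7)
The plan is to adapt Shen's strategy for second-order Schr\"odinger operators $-\Delta + V$ to the fourth-order setting, with the auxiliary function $m(x,V)$ playing the role of an intrinsic inverse length scale adapted to the potential. Heuristically, on the ball $B(x, m(x,V)^{-1})$ the potential $V$ contributes roughly the same energy as $(-\Delta)^2$ does at that scale, so one expects bounds in which each derivative $\nabla$ is comparable to a factor of $m(x,V)$ and each power of $V$ is comparable to $m(x,V)^2$. The hypotheses $V \in B_{N/2}$ and $V(x) \leq C\,m(x,V)^2$ are precisely what make this heuristic rigorous, because $B_{N/2}$ self-improves to $B_q$ for some $q > N/2$ and the function $m(x,V)$ is slowly varying.

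The first main step I would carry out is to establish pointwise off-diagonal bounds on the integral kernel $\Gamma(x,y)$ of $H^{-1}$. Concretely, I would prove, for every $k\geq 0$,
$$|\Gamma(x,y)| \leq \frac{C_k |x-y|^{4-N}}{(1+|x-y|\,m(x,V))^k},$$
together with analogous derivative bounds
$$|\nabla^j_x \Gamma(x,y)| \leq \frac{C_{j,k}|x-y|^{4-j-N}}{(1+|x-y|\,m(x,V))^k},\qquad j=1,2,3,$$
using weighted Caccioppoli estimates at scale $m(x,V)^{-1}$ together with a bootstrapping argument on the reverse-H\"older exponent. The $(-\Delta)^2$-analogue of Fefferman--Phong inequalities provides the quantitative control to absorb error terms produced by $V^2$ at that scale.

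With these kernel estimates in hand, multiplying by $V(x)^{2-j/2}\lesssim m(x,V)^{4-j}$ converts the decay factor into a summable expression, and one concludes $\|V^{2-j/2}\nabla^j H^{-1}f\|_p \leq C_j\|f\|_p$ for $1 < p \leq \infty$ by a Schur-type argument together with the standard weak $(1,1)$ theory for the resulting kernel. For the fourth-derivative bound, the identity
$$\nabla^4 H^{-1}f = \nabla^4 (-\Delta)^{-2}\bigl(f - V^2 H^{-1}f\bigr)$$
reduces the problem to the classical Calder\'on--Zygmund boundedness of $\nabla^4(-\Delta)^{-2}$ on $L^p$ combined with the previously obtained $j=0$ estimate applied to $V^2 H^{-1}f$; this is why the endpoint $p=\infty$ must be dropped in the last assertion.

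The main obstacle is the pointwise kernel estimate. In the second-order case one exploits the maximum principle and positivity of the resolvent of $-\Delta+V$, but the biharmonic Schr\"odinger operator $\Delta^2 + V^2$ has neither, so the standard probabilistic and Harnack-type tools are unavailable. Obtaining the correct gain $m(x,V)^{4-j}$ in the derivative bounds therefore requires a carefully iterated scheme of weighted $L^2$ energy inequalities at scale $m(x,V)^{-1}$, combined with a delicate use of the self-improvement of the reverse-H\"older exponent; this interplay between the scaling of $V$ and the fourth-order Caccioppoli theory is the technical heart of the argument.
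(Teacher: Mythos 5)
This theorem is not proved in the paper at all: it is Sugano's Theorem~1, quoted and used as a black box, so the only question is whether your sketch would stand as an independent proof. Its skeleton is indeed the strategy used in the literature (Shen's method for $-\Delta+V$, adapted to higher-order operators by Zhong and Sugano): the auxiliary function $m(x,V)$, decay of the fundamental solution measured in units of $m(x,V)^{-1}$, a Schur-type argument exploiting the slow variation of $m$, and the identity $\nabla^4H^{-1}f=\nabla^4(-\Delta)^{-2}\bigl(f-V^2H^{-1}f\bigr)$ combined with classical Calder\'on--Zygmund theory for the last assertion (which correctly explains why $p=\infty$ is excluded there). So the route and the final reductions you describe are the right ones.

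The gap is that the decisive step is only announced, not carried out. The pointwise bounds $|\nabla_x^j\Gamma(x,y)|\le C_{j,k}\,|x-y|^{4-j-N}(1+|x-y|\,m(x,V))^{-k}$ for $j=1,2,3$ are essentially the whole content of the theorem, and saying they follow from ``weighted Caccioppoli estimates, bootstrapping of the reverse H\"older exponent and a Fefferman--Phong analogue'' does not engage with the point where the second-order template genuinely breaks. Already for $-\Delta+V$, pointwise bounds on $\nabla_x\Gamma$ require $V\in B_N$ rather than $B_{N/2}$, and for second and higher derivatives pointwise kernel bounds are in general unavailable; the known proofs obtain the intermediate estimates via $L^2$-averaged (annulus) estimates of the fundamental solution, operator identities of the type you use only at the fourth-derivative stage, and the hypothesis $V\le C\,m(\cdot,V)^2$ (which you invoke merely to write $V^{2-j/2}\lesssim m^{4-j}$), not via the pointwise derivative bounds you posit. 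Unless you can actually produce those bounds, or replace them by averaged versions still strong enough for the Schur test (together with the comparability of $m(x,V)$ and $m(y,V)$ that the test requires), the argument for $j=1,2,3$ is incomplete: as written, the proposal is a plan that assumes the hard part.
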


In our case $\tilde V\in B_{\frac{N}{2}}$ and since $\tilde V$ behaves as a non-negative polynomial,  the condition $\tilde V(x)\leq C m(x,\tilde V)^2$ is fulfilled , cf. \cite[Remark 5]{sugano}. Therefore it holds $\tilde V^2 u\in L^{2}(\z)$ and $ \Delta ^{2} u \in L^{2}(\R^{N})$ with
\[  \| \tilde V^2 u\|_{2}\leq C\left\|\frac{f}{a}\right\|_{2},\quad 
 \| \Delta ^{2}u\|_{2}\leq C\left\|\frac{f}{a}\right\|_{2}\] for some constant $C>0.$
Then $u \in H^{4}(\R^{N})$ and
\[
D(A)=\{u\in H^{4}(\R^{N})\cap D\,: Au \in L^{2}(\R^{N})\}.
\]
\begin{proposition}\label{pr:pot-estimate}  Let $N\geq5,\alpha>0,\beta>(\alpha-2)^+$. There exists $\lambda_{0}>0$ such that for every $\lambda\geq\lambda_{0}$ and for every $u\in C_{c}^{\infty}(\R^{N}\setminus \{0\})$ the following estimate holds
\begin{equation*}
\|V^{2}u\|_2\leq C \|Au+\lambda u\|_2 .
\end{equation*}
\end{proposition}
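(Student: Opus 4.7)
The plan is to test $Au+\lambda u=f$ against $V^{2}\bar u$ and absorb the resulting cross terms into the main positive quantities $\|V^{2}u\|_{2}^{2}$ and $\int\phi|\Delta u|^{2}\,dx$, where $\phi:=a^{2}V^{2}$. Proposition \ref{pr:accret} will supply the lower-order a priori estimates, the weighted identity from the proof of Lemma \ref{lem:stimagamma} together with Rellich's inequality will handle the error terms, and the choice of $\lambda_{0}$ large will keep form accretivity uniform. From $\re\f(u,u)=\re\int f\bar u\,dx\leq\|f\|_{2}\|u\|_{2}$ and Proposition \ref{pr:accret} one gets $\|u\|_{2},\,\|Vu\|_{2},\,\||x|^{\alpha-1}\nabla u\|_{2},\,\||x|^{\alpha-2}u\|_{2},\,\|(1+|x|^{\alpha})\Delta u\|_{2}\leq C\|f\|_{2}$, and Rellich then gives $\||x|^{-2}u\|_{2}\leq C\|f\|_{2}$.

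Multiplying $f=Au+\lambda u$ by $V^{2}\bar u$ and integrating by parts twice (permissible since $\supp u\subset\rn\setminus\{0\}$) produces the identity
\[
\|V^{2}u\|_{2}^{2}+\lambda\|Vu\|_{2}^{2}+\int\phi|\Delta u|^{2}\,dx=\re\int f\,V^{2}\bar u\,dx-2\re\int \Delta u\,\nabla\phi\cdot\nabla\bar u\,dx-\re\int\bar u\,\Delta\phi\,\Delta u\,dx.
\]
Bounding $\bigl|\re\int f V^{2}\bar u\bigr|\leq\tfrac{1}{2}\|V^{2}u\|_{2}^{2}+\tfrac{1}{2}\|f\|_{2}^{2}$ and applying Cauchy--Schwarz plus Young to the two cross terms leaves (for any $\epsilon>0$) an absorbable piece $2\epsilon\int\phi|\Delta u|^{2}\,dx$ together with the error integrals $C_{\epsilon}\int\frac{|\Delta\phi|^{2}}{\phi}|u|^{2}\,dx$ and $C_{\epsilon}\int\frac{|\nabla\phi|^{2}}{\phi}|\nabla u|^{2}\,dx$.

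The main technical step is to show that these two error integrals are bounded by $C_{R,\epsilon}\|f\|_{2}^{2}+\delta\|V^{2}u\|_{2}^{2}$ with $\delta$ arbitrarily small. Direct computation with $\phi=(1+|x|^{\alpha})^{2}|x|^{2\beta}$ gives $|\Delta\phi|^{2}/\phi\lesssim|x|^{2\beta-4}+|x|^{2\alpha+2\beta-4}$ and $|\nabla\phi|^{2}/\phi\lesssim|x|^{2\beta-2}+|x|^{2\alpha+2\beta-2}$. Splitting into $\{|x|\leq 1\}$, $\{1\leq|x|\leq R\}$, $\{|x|\geq R\}$: near $0$, $\beta\geq 0$ yields $|x|^{2\beta-4}\leq|x|^{-4}$, controlled by Rellich via the a priori bounds; on the bounded annulus the weights are dominated by $C_{R}$, giving $C_{R}\|u\|_{2}^{2}\leq C_{R}\|f\|_{2}^{2}$; at infinity, the strict hypothesis $\beta>\alpha-2$ makes $2\alpha+2\beta-4<4\beta$, so $|x|^{2\alpha+2\beta-4}\leq R^{2\alpha-2\beta-4}V^{4}$ on $\{|x|\geq R\}$ with $R^{2\alpha-2\beta-4}$ arbitrarily small as $R\to\infty$. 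Gradient integrands of the form $\int|x|^{\gamma-2}|\nabla u|^{2}\,dx$ are traded for $\int|x|^{\gamma}|\Delta u|^{2}\,dx$ and $\int|x|^{\gamma-4}|u|^{2}\,dx$ via the identity proved inside Lemma \ref{lem:stimagamma} (used with $\gamma=2\beta$ and $\gamma=2\alpha+2\beta$), reducing them to the previously handled case, with the $|\Delta u|^{2}$ pieces absorbed into $\int\phi|\Delta u|^{2}\,dx$.

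Combining everything, choosing $\epsilon$ small and then $R$ large so that $C_{\epsilon}R^{2\alpha-2\beta-4}<1/4$, the assembled inequality reads $\tfrac{1}{2}\|V^{2}u\|_{2}^{2}+\tfrac{1}{2}\int\phi|\Delta u|^{2}\,dx+\lambda\|Vu\|_{2}^{2}\leq C\|f\|_{2}^{2}$, yielding the desired estimate $\|V^{2}u\|_{2}\leq C\|f\|_{2}$. The main obstacle is the control of the error integrals in the previous step: the strict inequality $\beta>\alpha-2$ is exactly what makes the tail contributions a small multiple of $\|V^{2}u\|_{2}^{2}$, and Lemma \ref{lem:stimagamma} is essential for converting gradient-weighted integrals into quantities that can either be absorbed on the left or be controlled by the form accretivity.
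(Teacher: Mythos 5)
Your proof is correct, but its execution differs from the paper's. Both arguments start from the same move --- pair $Au+\lambda u$ with (essentially) $V^{2}\bar u$ --- yet they diverge afterwards. The paper tests against $(1+V^{2})u$, expands $(1+|x|^{\alpha})^{2}(1+|x|^{2\beta})$ into the individual radial powers $|x|^{\gamma}$ with $\gamma=\alpha,2\alpha,2\beta,\alpha+2\beta,2(\alpha+\beta)$, applies the \emph{conclusion} of Lemma \ref{lem:stimagamma} to each term together with Rellich's inequality, and then chooses $\lambda_{0}$ so large that the resulting pointwise combination of weights dominates $\tfrac14(1+|x|^{2\beta})^{2}$; a single Cauchy--Schwarz at the end gives the estimate, with no need for a priori bounds or a decomposition of $\z$. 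You instead work with $\phi=a^{2}V^{2}$ as a single weight, integrate by parts explicitly, and control the cross terms by Young's inequality with absorption into $\int\phi|\Delta u|^{2}$ and $\|V^{2}u\|_{2}^{2}$; the error integrals are then handled by the a priori bounds coming from Proposition \ref{pr:accret} (via $\re\f(u,u)\le\|f\|_{2}\|u\|_{2}$), Rellich near the origin, a bounded-annulus estimate, and the tail smallness coming from $2\alpha+2\beta-4<4\beta$, i.e.\ exactly the hypothesis $\beta>\alpha-2$, with the gradient-weighted errors traded via the identity \emph{inside} the proof of Lemma \ref{lem:stimagamma}. Your route buys a transparent energy-estimate structure and makes visible where the condition $\beta>\alpha-2$ enters, at the cost of needing the form accretivity as input, a region splitting, and a two-parameter absorption cascade (the Young parameter in the gradient trade must be fixed after $C_{\epsilon}$ --- routine, but worth stating); the paper's route is more self-contained for this proposition, pushing all the smallness into one pointwise choice of $\lambda_{0}$. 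Two minor points to tidy up: the weighted identity from Lemma \ref{lem:stimagamma} is proved for real $u$, so for complex $u$ you should apply it to $\re u$ and $\im u$ separately (the paper likewise reduces to real $u$), and the admissible $\lambda_{0}$ in your argument is the one furnished by Proposition \ref{pr:accret}, which should be said explicitly since the statement asserts the existence of such a $\lambda_{0}$.
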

\begin{proof}
Let $u\in C_{c}^{\infty}(\R^{N}\setminus \{0\})$ be a real function. 
Recall that by Lemma \ref{lem:stimagamma}, for every $\gamma>0$ we have
\begin{equation}\label{1}
\int_{\R^{N}} |x|^{\gamma}(\Delta ^{2}u)u\,dx\geq 
		k\int_\z |x|^{\gamma-4}u^{2}dx,
\end{equation}
where $k$ is a real constant which depends on $\gamma$ and $N$.
Now we evaluate $ \f(u,(1+V^2)u) =\int_\z (A+\lambda )u (1+V^{2})u\,dx$. By \eqref{1} and  Rellich's inequality there exist $k_1,k_2,k_3,k_4,k_5\in\R$ and a positive $c_0$ such that
\begin{align*}
&\int_{\R^{N}} \left ((1+|x|^{\alpha})^{2}\Delta^{2}u+|x|^{2\beta}u +\lambda u \right)(1+|x|^{2\beta})udx\\
&\quad= \int_{\R^{N}}(\Delta u)^2+ (2|x|^{\alpha}+|x|^{2\alpha}+|x|^{2\beta}+2|x|^{\alpha+2\beta}+|x|^{2(\alpha+\beta)})(\Delta^{2}u)u\\
&\qquad		+(|x|^{4\beta}+(1+\lambda)|x|^{2\beta}+\lambda)u^2dx\\
&\quad \geq \int_{\R^{N}}\left(c_{0}|x|^{-4}+k_{1}|x|^{\alpha-4}+k_{2}|x|^{2\alpha-4}+k_{3}|x|^{2\beta -4}+k_{4}|x|^{\alpha+2\beta-4}
		+k_5|x|^{2(\alpha+\beta-2)}\right.\\
&\qquad \left. +|x|^{4\beta}+(1+\lambda)|x|^{2\beta}+\lambda\right)u^2dx\\
&\quad = \int_{\R^{N}}\left(c_{0}|x|^{-4}+k_{1}|x|^{\alpha-4}+k_{2}|x|^{2\alpha-4}+k_{3}|x|^{2\beta -4}+k_{4}|x|^{\alpha+2\beta-4}
		+k_5|x|^{2(\alpha+\beta-2)}\right.\\
&\qquad \left. +\frac{3}{4}|x|^{4\beta}+(\frac{1}{2}+\lambda)|x|^{2\beta}+\lambda-\frac{3}{4}\right)u^2dx
	+\int_{\R^{N}}\left( \frac{1}{4}|x|^{4\beta}+\frac{1}{2}|x|^{2\beta}+\frac{1}{4}\right)u^{2}dx.
\end{align*}
We can choose $\lambda_0 $ such that
\begin{align*}
&c_{0}|x|^{-4}+k_{1}|x|^{\alpha-4}+k_{2}|x|^{2\alpha-4}+k_{3}|x|^{2\beta -4}+k_{4}|x|^{\alpha+2\beta-4} \\
&\quad+k_5|x|^{2(\alpha+\beta-2)}+\frac{3}{4}|x|^{4\beta}+\left(\frac{1}{2}+\lambda_0\right)|x|^{2\beta}+\lambda_0-\frac{3}{4}\geq 0
\end{align*}
and then
\[
\int_{\R^{N}}(Au+\lambda u)(1+V^{2})udx\geq C \int_{\R^{N}}(1+V^{2})^{2}u^{2}dx
\]
for every $\lambda\geq\lambda_0$. By H\"older's inequality
\[
\|(1+V^{2})u\|^{2}_{2}\leq C\|Au+\lambda u\|_{2}\| (1+V^{2})u\|_{2}
\]
so that
\[
\|V^{2}u\|_{2}\leq \|(1+V^{2})u\|_{2}\leq C\|Au+\lambda u\|_{2}.
\]
\end{proof}

For every function $u$ with derivative up to order 4 we set
\begin{align*}
& |D^{4}u|=\left( \sum_{i,j,k,l=1}^{N}|D_{ijkl}u|^{2} \right)^{\frac12},\,  |D^{3}u|=\left( \sum_{i,j,k=1}^{N}|D_{ijk}u|^{2} \right)^{\frac12},\\
& |D^{2}u|=\left( \sum_{i,j=1}^{N}|D_{ij}u|^{2} \right)^{\frac12},\,  |D u|=\left( \sum_{i=1}^{N}|D_{i}u|^{2} \right)^{\frac12}.
\end{align*}
   By applying  \cite[Lemma 4.4]{Met-Sob-Spi2} to the derivatives of $u$,     for every $u\in C_{c}^{\infty}(\R^{N}\setminus \{0\})$, $h=1,2,3$ and $\gamma\in\R$ the following weighted interpolation inequalities holds 
\begin{equation}\label{eq:iterp-weighted1}
\||x|^{\gamma} D^{h}u\|_2\leq \varepsilon \||x|^{\gamma+1} D^{h+1}u\|_2+C_{\varepsilon}\||x|^{\gamma-1}D^{h-1}u\|_2.
\end{equation}
 

\begin{proposition} Let $N>8,\alpha>0,\beta>(\alpha-2)^+$. For every $u\in C_{c}^{\infty}(\R^{N}\setminus \{0\})$ we have
\begin{align*}
& \| |x|^{2\alpha-h}D^{4-h}u\|_2\leq C\left( \|Au\|_2+\|u\|_2 \right)
\end{align*}
for $h=0,1,2,3,4$.
\end{proposition}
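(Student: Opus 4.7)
The plan is to establish the two endpoint cases $h=4$ and $h=0$ separately, then recover the intermediate values $h=1,2,3$ from the weighted interpolation inequality \eqref{eq:iterp-weighted1}. Throughout I write $N_h := \||x|^{2\alpha-h}D^{4-h}u\|_2$. To handle $h=4$, I split $Au=a^{2}\Delta^{2}u+V^{2}u$; by Proposition \ref{pr:pot-estimate}, $\|V^{2}u\|_2\leq C(\|Au\|_2+\|u\|_2)$, hence $\|a^{2}\Delta^{2}u\|_2\leq C(\|Au\|_2+\|u\|_2)$, and since $|x|^{2\alpha}\leq a^{2}$, also $\||x|^{2\alpha}\Delta^{2}u\|_2\leq C(\|Au\|_2+\|u\|_2)$. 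Applying Lemma \ref{lem:stimagamma} with $\gamma=4\alpha-4$ and rewriting $\int_\z|x|^{4\alpha-4}(\Delta^{2}u)u\,dx=\int_\z(|x|^{2\alpha}\Delta^{2}u)(|x|^{2\alpha-4}u)\,dx$, Cauchy--Schwarz gives
\[
k\,N_{4}^{2}\leq \||x|^{2\alpha}\Delta^{2}u\|_2\,N_{4},
\]
so that $N_{4}\leq C\,\||x|^{2\alpha}\Delta^{2}u\|_2\leq C(\|Au\|_2+\|u\|_2)$, provided the Rellich-type constant $k$ is strictly positive (this is where the hypothesis $N>8$ enters).

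For the endpoint $h=0$, I set $v=|x|^{2\alpha}u\in C_c^\infty(\z\setminus\{0\})$ and apply the classical Calder\'on--Zygmund bound $\|D^{4}v\|_2\leq C\|\Delta^{2}v\|_2$. The Leibniz rule expansion produces
\[
D^{4}v=|x|^{2\alpha}D^{4}u+\sum_{k=1}^{4}R_{k},\qquad \Delta^{2}v=|x|^{2\alpha}\Delta^{2}u+\sum_{k=1}^{4}S_{k},
\]
where the commutator terms satisfy $|R_{k}|+|S_{k}|\leq C|x|^{2\alpha-k}|D^{4-k}u|$, giving
\[
N_{0}\leq C\,\||x|^{2\alpha}\Delta^{2}u\|_2+C\sum_{k=1}^{4}N_{k}.
\]
I then bound the intermediate $N_{1},N_{2},N_{3}$ by iterating \eqref{eq:iterp-weighted1} in the form $N_{h}\leq\varepsilon N_{h-1}+C_\varepsilon N_{h+1}$ for $h=1,2,3$. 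Solving this small triangular system---legitimate because all $N_h$ are finite for $u\in C_c^\infty(\z\setminus\{0\})$---yields $N_{h}\leq\varepsilon' N_{0}+C_{\varepsilon'}N_{4}$ for $h=1,2,3$, with the coefficient of $N_0$ controlled by $\varepsilon'$. Inserting these and choosing $\varepsilon'$ small enough to absorb the $N_{0}$ contribution on the left yields $N_{0}\leq C(\||x|^{2\alpha}\Delta^{2}u\|_2+N_{4})\leq C(\|Au\|_2+\|u\|_2)$.

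Finally, the intermediate cases $h=1,2,3$ follow by feeding the now-established endpoint bounds $N_{0},N_{4}\leq C(\|Au\|_2+\|u\|_2)$ back into the same interpolation inequalities $N_{h}\leq\varepsilon' N_{0}+C_{\varepsilon'}N_{4}$. The main obstacle will be the $h=0$ step: one must carry out the Leibniz expansion explicitly, verify that every cross term is pointwise dominated by a constant multiple of $|x|^{2\alpha-k}|D^{4-k}u|$ so the weighted interpolation applies verbatim, and then close the bootstrap by absorbing the small $N_{0}$ term that re-enters through the interpolation. The dimensional restriction $N>8$ is used only in the Rellich-type estimate for $h=4$, to ensure positivity of the constant $k$ furnished by Lemma \ref{lem:stimagamma} at $\gamma=4\alpha-4$.
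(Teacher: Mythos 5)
Your treatment of the intermediate cases $h=1,2,3$ (iterating \eqref{eq:iterp-weighted1} to get $N_h\leq \varepsilon N_0+C_\varepsilon N_4$) and of $h=0$ (commuting the weight into the classical Calder\'on--Zygmund bound for $\Delta^2(|x|^{2\alpha}u)$ and absorbing the commutator terms) is essentially the paper's argument, and the reduction $\||x|^{2\alpha}\Delta^2u\|_2\leq\|Au\|_2+\|V^2u\|_2\leq C(\|Au\|_2+\|u\|_2)$ via Proposition \ref{pr:pot-estimate} is also how the paper closes. The genuine gap is your endpoint $h=4$. You apply Lemma \ref{lem:stimagamma} with $\gamma=4\alpha-4$ and assert that the constant $k$ it furnishes is strictly positive when $N>8$. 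Nothing supports this: the lemma explicitly states that $k$ ``need not be positive,'' and its proof produces $k=\tfrac{c_2}{2}-c_4-c_6^2$ with $c_3=2\gamma(|\gamma-2|+1)+1$, $c_6^2=c_3^2/2$, so for large $\gamma$ (i.e.\ large $\alpha$) one has $k\sim-\tfrac52\gamma^4<0$ regardless of the dimension; with $k\leq0$ your Cauchy--Schwarz step $kN_4^2\leq\||x|^{2\alpha}\Delta^2u\|_2\,N_4$ yields nothing. In addition, the lemma is stated only for $\gamma>0$, so $\gamma=4\alpha-4$ is not even admissible when $0<\alpha\leq1$.

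The paper obtains the $h=4$ bound differently, and this is precisely where $N>8$ actually enters: by the higher-order Rellich inequality of Davies--Hinz, $\||x|^{-4}u\|_2\leq C\|\Delta^2u\|_2$ for $N>8$, combined with the pointwise bound $|x|^{2\alpha-4}\leq C\bigl(1+|x|^{-4}+|x|^{2\beta}\bigr)$ (valid since $2\alpha-4<2\beta$), which gives $\||x|^{2\alpha-4}u\|_2\leq C\bigl(\|\Delta^2u\|_2+\|V^2u\|_2+\|u\|_2\bigr)$; the right-hand side is then controlled by $\|Au\|_2+\|u\|_2$ using $\|\Delta^2u\|_2+\||x|^{2\alpha}\Delta^2u\|_2\leq\|a^2\Delta^2u\|_2$ and Proposition \ref{pr:pot-estimate}. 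Replacing your Lemma-\ref{lem:stimagamma} step by this Rellich argument repairs the proof; as written, the $h=4$ case (and hence the absorption that feeds $N_4$ into your $h=0$ and $h=1,2,3$ bounds) does not go through.
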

\begin{proof}
Let $u\in C_{c}^{\infty}(\R^{N}\setminus \{0\})$. We propose now to use repeatedly \eqref{eq:iterp-weighted1}   to prove that
\begin{align}
& \||x|^{2\alpha-1}D^{3}u\|_2 \leq \varepsilon  \||x|^{2\alpha}D^{4}u\|_2+ C\||x|^{2\alpha-4} u\|_2 \label{eq:2alpha-1}\\
& \||x|^{2\alpha-2}D^{2}u\|_2 \leq \varepsilon  \||x|^{2\alpha}D^{4}u\|_2+ C\||x|^{2\alpha-4} u\|_2 \label{eq:2alpha-2} \\
& \||x|^{2\alpha-3}Du\|_2 \leq \varepsilon  \||x|^{2\alpha}D^{4}u\|_2+ C\||x|^{2\alpha-4} u\|_2. \label{eq:2alpha-3}
\end{align}
In the sequel we will  denote the constants as $C$ and $\varepsilon $ even if they may change from line to line.
We have
\begin{align*}
 \| |x|^{2\alpha-3}Du\|_2&\leq \varepsilon \||x|^{2\alpha-2}D^{2}u\|_2+C\||x|^{2\alpha-4}u\|_2\\
& \leq \varepsilon \left( \varepsilon \||x|^{2\alpha-1}D^{3}u \|_2+C\||x|^{2\alpha-3}Du \|_2 \right)+C\||x|^{2\alpha-4}u\|_2
\end{align*}
then 
\begin{equation}\label{eq:2alpha-3.1}
\| |x|^{2\alpha-3}Du\|_2\leq \varepsilon \||x|^{2\alpha-1}D^{3}u\|_2+C\||x|^{2\alpha-4}u\|_2.
\end{equation}
Now 
\begin{align*}
& \||x|^{2\alpha-1}D^{3}u\|_2\leq \varepsilon  \||x|^{2\alpha}D^{4}u\|_2+ C\||x|^{2\alpha-2}D^{2}u\|_2  \\
&\quad \leq \varepsilon \||x|^{2\alpha}D^{4}u\|_2 +C\left( \varepsilon \||x|^{2\alpha-1}D^{3}u\|_2+C\||x|^{2\alpha-3}Du\|_2 \right)\\
&\quad \leq \varepsilon \||x|^{2\alpha}D^{4}u\|_2 +C\left[ \varepsilon \||x|^{2\alpha-1}D^{3}u\|_2+C\left(\varepsilon \||x|^{2\alpha-1}D^{3}u\|_2+C\||x|^{2\alpha-4}u\|_2  \right) \right]
\end{align*}
therefore, one obtains \eqref{eq:2alpha-1}.

A combination of \eqref{eq:2alpha-3.1} and \eqref{eq:2alpha-1} gives \eqref{eq:2alpha-3}.

Finally, we have
\begin{align*}
 \||x|^{2\alpha-2}D^{2}u\|_2 &\leq \varepsilon \||x|^{2\alpha-1}D^{3}u\|_2+C\||x|^{2\alpha-3}Du\|_2 \\
& \leq \varepsilon  \||x|^{2\alpha}D^{4}u\|_2+ C\||x|^{2\alpha-4} u\|_2
\end{align*}
and then \eqref{eq:2alpha-2} is also proved.

Now we prove the following weighted Calder\'on-Zygmund type estimate
\begin{align}\label{eq:weightedcaldzyg}
& \||x|^{2\alpha} D^{4}u\|_2\leq C \left( \||x|^{2\alpha}\Delta^{2}u\|_{2} + \||x|^{2\alpha-4}u\|_{2} \right).
\end{align}
We have
\begin{align*}
& \||x|^{2\alpha} D^{4}u\|_{2} \leq \| D^{4}\left( |x|^{2\alpha}u \right) \|_{2}\\
&\qquad		+C\left( \| |x|^{2\alpha-1}D^{3}u\|_{2}+ \| |x|^{2\alpha-2}D^{2}u\|_{2} +\| |x|^{2\alpha-3}D  u\|_{2} +\| |x|^{2\alpha-4}u\|_{2}\right)\\
&\quad	\leq \| \Delta ^{2}\left( |x|^{2\alpha}u \right)\|_{2}\\
&\qquad		+C\left( \| |x|^{2\alpha-1}D^{3}u\|_{2}+ \| |x|^{2\alpha-2}D^{2}u\|_{2} +\| |x|^{2\alpha-3}D  u\|_{2} +\| |x|^{2\alpha-4}u\|_{2}\right)\\
&\quad	\leq \| |x|^{2\alpha}\Delta ^{2}u\|_{2}\\
&\qquad		+C\left( \| |x|^{2\alpha-1}D^{3}u\|_{2}+ \| |x|^{2\alpha-2}D^{2}u\|_{2} +\| |x|^{2\alpha-3}D u\|_{2} +\| |x|^{2\alpha-4}u\|_{2}\right)\\
&\quad	\leq \| |x|^{2\alpha}\Delta ^{2}u\|_{2} + \varepsilon \||x|^{2\alpha} D^{4}u\|+C \||x|^{2\alpha-4} u\|_2 \\
\end{align*}
and then  \eqref{eq:weightedcaldzyg} follows.

So for $h=0,1,2,3$ we have proved  that
\[
\| |x|^{2\alpha-h}D^{4-h}u\|_2\leq C\left( \| |x|^{2\alpha} \Delta ^{2} u\|_2+\||x|^{2\alpha-4 }u\|_2 \right).
\]

Now, 
from \cite[Corollary 14]{dav-hin}  for $N>8$ the higher order Rellich inequality holds
\[
\left \|\frac{u}{|x|^{4}}\right \|_{2}\leq C\|\Delta ^{2}u\|_{2}
\]
and since by the assumptions on $\alpha$ and $\beta$ one can estimate $|x|^{2\alpha-4}\leq C\left( 1+\frac{1}{|x|^{4}}+|x|^{2\beta}\right)$, it follows that  
\[
\| |x|^{2\alpha-4}u\|_{2}\leq C\left( \|\Delta^{2}u\|_{2}+ \|V^{2}u\|_{2}+\|u\|_{2}\right).
\]

Thus, for $h=0,1,2,3,4$ we have

\begin{align*}
 \| |x|^{2\alpha-h}D^{4-h}u\|_2&\leq 
	C\left( \|\Delta ^{2}u\|_2+\||x|^{2\alpha}\Delta ^{2}u\|_2+\|V^{2}u\|_2+\|u\|_{2} \right)\\
&  \leq C\left( \|Au\|_{2}+\|V^{2}u\|_{2}+\|u\|_{2} \right)
\end{align*}
and the thesis follows by Proposition \ref{pr:pot-estimate}.

\end{proof}

\begin{remark}\label{rem}
Note that for $h=0,1,2,3$ the weighted interpolation inequalities
\[
\| |x|^{2\alpha-h}D^{4-h}u\|_2\leq C\left( \| |x|^{2\alpha} \Delta ^{2} u\|_2+\||x|^{2\alpha-4 }u\|_2 \right)
\]
hold for any $u\in\ccc, N\geq1$, and any real $\alpha$. In particular, setting $\alpha=0$ the inequalities read as
\begin{equation*}
\| |x|^{-h}D^{4-h}u\|_2\leq C\left( \|   \Delta ^{2} u\|_2+\||x|^{ -4 }u\|_2 \right)
\end{equation*}
then, if $N>8$, by the higher order Rellich's inequality one obtains
\begin{equation*}\label{inter}
 \| |x|^{-h}D^{4-h}u\|_2\leq C  \|   \Delta ^{2} u\|_2 
\end{equation*}
for $h=0,1,2,3,4$.

Moreover, for $u\in H^4(\z)$,  reasoning as above with the standard interpolation inequality $||D u||_2\leq C\left(|| D^2u||_2+ ||u||_2\right)$ one obtains that $||u||_{H^4(\z)}\leq C\left(||D^4 u||_2+ ||u||_2\right)$ and then for $u\in D(A)$
\begin{equation}\label{normah4}
 ||u||_{H^4(\z)}\leq C||u||_A.
\end{equation}
\end{remark}

Now arguing as in Proposition \ref{pr:core-for-a} we prove that $C_{c}^{\infty}(\R^{N}\setminus \{0\})$ is a core for $A$ in
\[
D_{2}(A)=\{u\in H^{4}(\R^{N})\,:\,V^{2}u\in L^{2}(\R^{N}), |x|^{2\alpha-h}D^{4-h}u\in L^{2}(\R^{N}) \text{ for } h=0,1,2,3,4\}.
\]

\begin{proposition}\label{pr:core-for-A2} Let $N>8,\alpha>0,\beta>(\alpha-2)^+$. Then
$C_{c}^{\infty}(\R^{N})\setminus \{0\}$ is dense in $D_{2}(A)$ with respect to the operator norm.
There exists $C\geq 0$ such that for every $u\in D_{2}(A)$  
\begin{align*}
& \||x|^{2\alpha-h}D^{4-h}u\|_{2}\leq C\left(\|Au\|_{2}+\|u\|_{2} \right)\text{ for }h=0,1,2,3,4,\\
& \|V^{2}u\|_{2}\leq C\left( \|Au\|_{2}+\|u\|_{2}\right).
\end{align*}
\end{proposition}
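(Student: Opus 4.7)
The plan is to mirror the density argument of Proposition \ref{pr:core-for-a}: for $u \in D_2(A)$, approximate $u$ by $u\varphi_n$ in the graph norm $\|\cdot\|_A := \|A\cdot\|_2 + \|\cdot\|_2$, using the same cutoff family $\varphi_n$ (supported away from $0$ and infinity, with $|\nabla^k \varphi_n| \leq C|x|^{-k}$ on the annular set $K_n = (B(2/n)\setminus B(1/n)) \cup (B(2n)\setminus B(n))$), and then mollify on compact subsets of $\R^N\setminus\{0\}$ to reach $C_c^\infty(\R^N\setminus\{0\})$. Once density is in hand, both a priori inequalities pass from $C_c^\infty(\R^N\setminus\{0\})$, where they are already available (the preceding proposition together with Proposition \ref{pr:pot-estimate}), to all of $D_2(A)$ by a limiting argument.

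For the density step I would decompose
\[
A(u\varphi_n) - Au = a^2\bigl[\Delta^2(u\varphi_n) - \Delta^2 u\bigr] + V^2 u(\varphi_n - 1),
\]
and use Leibniz to write $\Delta^2(u\varphi_n) - \Delta^2 u$ as the main term $(\varphi_n - 1)\Delta^2 u$ plus cross terms of the form $\partial^k \varphi_n\,\partial^{4-k} u$, $k = 1, 2, 3, 4$, each supported on $K_n$. The potential piece tends to $0$ in $L^2$ by dominated convergence since $V^2 u \in L^2$. For the main term, $a^2 \Delta^2 u \in L^2$, because $\Delta^2 u \in L^2$ (from $u \in H^4$) and $|x|^{2\alpha}\Delta^2 u \in L^2$ (from the $h=0$ case of the $D_2(A)$-definition), so DCT gives convergence. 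For each cross term one has
\[
a^2\bigl|\partial^k \varphi_n\,\partial^{4-k} u\bigr| \leq C\bigl(|x|^{-k} + |x|^{2\alpha-k}\bigr)|D^{4-k} u|\chi_{K_n};
\]
the weighted factor is in $L^2$ by the defining conditions of $D_2(A)$, while the unweighted factor $|x|^{-k}|D^{4-k} u|$ is in $L^2$ by the higher-order Rellich chain of Remark \ref{rem} extended from $C_c^\infty(\R^N\setminus\{0\})$ to $H^4(\R^N)$, a passage that is legitimate since $\{0\}$ has vanishing $H^4$-capacity when $N > 8$. DCT then kills every cross term. Finally, because each $u\varphi_n$ has compact support in $\R^N\setminus\{0\}$, standard mollification on a neighbourhood of its support, where the coefficients $a$ and $V$ are bounded, produces the required $C_c^\infty(\R^N\setminus\{0\})$ approximants in $\|\cdot\|_A$.

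For the inequalities themselves I would choose $(u_n) \subset C_c^\infty(\R^N\setminus\{0\})$ with $u_n \to u$ in $\|\cdot\|_A$. The previous proposition and Proposition \ref{pr:pot-estimate} applied to each $u_n$ give uniform bounds on $\||x|^{2\alpha-h}D^{4-h} u_n\|_2$ and $\|V^2 u_n\|_2$, and the right-hand sides $\|Au_n\|_2 + \|u_n\|_2$ converge to $\|Au\|_2 + \|u\|_2$. By \eqref{normah4}, convergence in $\|\cdot\|_A$ forces $u_n \to u$ in $H^4(\R^N)$, so $D^{4-h}u_n \to D^{4-h}u$ in $L^2$; testing against $\phi \in C_c^\infty(\R^N\setminus\{0\})$ (which is dense in $L^2$) identifies the weak-$L^2$ limit of $|x|^{2\alpha-h}D^{4-h}u_n$ as $|x|^{2\alpha-h}D^{4-h}u$, and similarly $V^2 u_n \rightharpoonup V^2 u$. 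Weak lower semicontinuity of the $L^2$-norm then transfers both inequalities to $u$. The hard part is the cross-term control near the origin in the density step: when $\alpha \geq 2$ the $D_2(A)$ weight $|x|^{2\alpha-k}$ does not dominate the $|x|^{-k}$ singularity, so one genuinely needs the higher-order Rellich inequality for $H^4(\R^N)$, which is available precisely when $N > 8$—exactly the dimensional restriction in the hypothesis.
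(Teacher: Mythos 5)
Your argument is correct and follows essentially the same route as the paper: multiply by the same cutoffs $\varphi_n$, expand $\Delta^2(u\varphi_n)$ by Leibniz, control the cross terms by $C\left(|x|^{-k}+|x|^{2\alpha-k}\right)|D^{4-k}u|\chi_{K_n}$ using the defining weights of $D_2(A)$ together with the Rellich chain of Remark \ref{rem}, and conclude by dominated convergence, after which the a priori bounds pass from $C_c^\infty(\R^N\setminus\{0\})$ to $D_2(A)$ by approximation. The only difference is that you make explicit several steps the paper leaves implicit (the final mollification, the extension of Remark \ref{rem} from $C_c^\infty(\R^N\setminus\{0\})$ to $H^4(\R^N)$ for $N>8$, and the weak-convergence transfer of the inequalities), which is sound.
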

\begin{proof}
It is enough to prove that the set of functions in $H^4(\R^N)$ with compact support contained in 
$\rn\setminus\{0\}$ is dense in $D_{2}(A)$ with respect to the operator norm.
Take $u\in D_{2}(A)$ and consider $u_n=u\varphi _n$,
where $\varphi_n\in 
\ccc$ is such that 

\begin{equation*}  
\left\{
\begin{array}{ll}
\varphi_n=0  \  \text{in} \   B({\frac1n})\cup B^c({2n}), \\
\varphi_n=1 \  \text{in} \  B(n)\setminus B({\frac2n)}, \\
0\leq \varphi_n\leq 1,\\
|\nabla \varphi_n(x)|\leq C\frac{1}{|x|},\\
|D^{2} \varphi_n(x)|\leq C\frac{1}{|x|^{2}},\\
|D^{3} \varphi_n(x)|\leq C\frac{1}{|x|^{3}},\\
|D^{4} \varphi_n(x)|\leq C\frac{1}{|x|^{4}}.\\
\end{array}\right.
\end{equation*}  
 Observe that here we can consider the same sequence as in Proposition \ref{pr:core-for-a}.

We have
\begin{align*}
\Delta^{2}(u\varphi_{n})&=\Delta \left( \Delta u \varphi_{n}+2\nabla u\cdot \nabla \varphi_{n}+u\Delta \varphi_{n} \right) \\
&  =\varphi_{n}\Delta^{2}u +u\Delta ^{2}\varphi_{n}+4\nabla\Delta u\cdot \nabla \varphi_{n}+4\nabla u\cdot \nabla \Delta\varphi_{n}\\
&\quad +4\sum_{i=1}^{N}\nabla D_{i}u\cdot \nabla D_{i}\varphi_{n}+2\Delta  u\Delta \varphi_{n}.
\end{align*} 
Then
\begin{align*}
 |Au_{n}(x)-Au(x)|&\leq  |1-\varphi_{n}(x)||Au(x) |\\
&\quad +C\sum_{i,j,k=1}^N \left(|x|^{-1}+|x|^{\alpha-1}+|x|^{2\alpha-1}\right)|  D_{ijk}u(x)|\chi_{K_{n}}
	\\&\quad+C\sum_{i,j=1}^N \left(|x|^{-2}+|x|^{\alpha-2}+|x|^{2\alpha-2}\right)|D_{ij}u(x)|\chi_{K_{n}}\\
&\quad +C\sum_{i=1}^N \left(|x|^{-3}+|x|^{\alpha-3}+|x|^{2\alpha-3}\right)|D_{i}u(x)|\chi_{K_{n}}
	\\&\quad+C \left(|x|^{-4}+|x|^{\alpha-4}+|x|^{2\alpha-4}\right)|u(x)|\chi_{K_{n}}\\	
&   \leq  | 1-\varphi_{n}(x)|| Au(x) |\\
&\quad +C\sum_{i,j,k=1}^N \left(|x|^{-1}+|x|^{2\alpha-1}  \right)|D_{ijk}u(x)|\chi_{K_{n}}
\\&\quad	+C\sum_{i,j=1}^N \left(|x|^{-2}+|x|^{2\alpha-2}  \right)|D_{ij}u(x)|\chi_{K_{n}}\\
&\quad +C\sum_{i=1}^N\left(  |x|^{-3}+ |x|^{2\alpha-3} \right)|D_{i}u(x)|\chi_{K_{n}}
	\\&\quad+C \left(|x|^{-4}+ |x|^{2\alpha-4} \right)|u(x)|\chi_{K_{n}}
\end{align*}
where $K_n=B({\frac2n)}\setminus B({\frac1n)}\cup B({2n})\setminus B(n)$. All the terms in the right hand side converge  to $0$
pointwisely and hence in $L^{2}(\R^{N})$ since $|x|^{2\alpha -h}D^{4-h}u(x)\chi_{K_{n}}\leq |x|^{2\alpha -h}D^{4-h}u(x)\in L^{2}\left( \R^{N} \right)$
for $h=0,1,2,3,4$ and by Remark \ref{rem} also for $\alpha=0$.
\end{proof}

Finally we prove that $D(A)$ coincides with $D_{2}(A)$.

\begin{theorem}\label{th:domain}
Assume that $N>8,\alpha>0,\beta>(\alpha-2)^+$. Then maximal domain $D(A)$  coincides with $D_2(A)$.
\end{theorem}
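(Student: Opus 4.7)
I prove $D(A) = D_2(A)$ by double inclusion, invoking the core property from Proposition~\ref{pr:core-for-A2} and the a priori estimates developed there.

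For $D_2(A) \subset D(A)$, I argue by density and closedness. Given $u \in D_2(A)$, Proposition~\ref{pr:core-for-A2} furnishes $u_n \in C_c^\infty(\R^N \setminus \{0\})$ with $u_n \to u$ and $A u_n \to A u$ in $L^2(\R^N)$. Each $u_n$ lies in $D(A)$: by \eqref{intA}, $\f(u_n, v) = \int_\z (Au_n+\lambda u_n)\overline{v}\,dx$ for every $v \in \ccc$, and density of $\ccc$ in $D$ (Proposition~\ref{pr:core-for-a}) extends this identity to all $v \in D$, so that $A_\lambda u_n = A u_n + \lambda u_n$. Closedness of the generator $(-A, D(A))$ (Theorem~\ref{thmform}) then forces $u \in D(A)$.

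For $D(A) \subset D_2(A)$ my plan is operator-theoretic. The a priori estimates of Proposition~\ref{pr:core-for-A2} make the $D_2(A)$-norm equivalent to the graph norm of $A$ on $C_c^\infty(\R^N\setminus\{0\})$, so $(A, D_2(A))$ is a closed operator coinciding with the graph closure of $(A, C_c^\infty(\R^N\setminus\{0\}))$. By the first inclusion, $(A, D_2(A))$ is then a closed restriction of the bijection $A + \lambda : D(A) \to L^2(\R^N)$, so equality of domains reduces to proving the range $(A+\lambda)(D_2(A))$ is all of $L^2(\R^N)$. Since $C_c^\infty(\R^N\setminus\{0\}) \subset D_2(A)$ and $(A, D_2(A))$ is closed, this surjectivity is equivalent to the density of $(A+\lambda)\big(C_c^\infty(\R^N\setminus\{0\})\big)$ in $L^2(\R^N)$.

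The density of this range is the principal obstacle. I would attack it by duality: if $g \in L^2(\R^N)$ annihilates the range, then $(A^* + \lambda) g = 0$ in $\mathcal{D}'(\R^N\setminus\{0\})$. Since $-A^*$ is also the generator of an analytic semigroup on $L^2(\R^N)$, being the adjoint of the generator from Theorem~\ref{thmform}, one can rerun the Sugano/reverse H\"older analysis at the beginning of this section for $A^*$ to obtain $g \in H^4(\R^N)$ with $V^2 g \in L^2$ and $A^* g \in L^2$ computed classically on $\R^N\setminus\{0\}$. Because $\{0\}$ is a Lebesgue null set and $(A^*+\lambda)g \in L^2$ vanishes on $\R^N\setminus\{0\}$, it vanishes almost everywhere, placing $g \in D(A^*)$ with $(A^*+\lambda)g = 0$, and injectivity of $A^*+\lambda$ forces $g = 0$. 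The subtle point is verifying that the Sugano estimates genuinely apply to $A^*$, whose lower-order coefficient structure differs from that of $A$; a robust alternative is to solve $(A+\lambda)u_n = f_n$ for smooth compactly supported $f_n\to f$ in $L^2$, verify $u_n \in D_2(A)$ via local elliptic regularity combined with the fast decay at infinity afforded by the polynomially growing coefficients, and pass to the limit through the a priori estimates of Proposition~\ref{pr:core-for-A2}.
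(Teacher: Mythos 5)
Your reduction is fine as far as it goes: the inclusion $D_2(A)\subset D(A)$ via the core property of Proposition~\ref{pr:core-for-A2} and closedness of the generator is correct, and it is also correct that the remaining inclusion $D(A)\subset D_2(A)$ is equivalent to surjectivity of $A+\lambda$ from $D_2(A)$, i.e.\ to density of $(A+\lambda)\bigl(C_c^\infty(\R^N\setminus\{0\})\bigr)$ in $L^2(\z)$. But that density statement is precisely the hard content of the theorem, and neither of your two routes establishes it. In the duality route, the annihilator $g$ satisfies $(A^*+\lambda)g=0$ only in $\mathcal{D}'(\R^N\setminus\{0\})$, and to upgrade this you invoke ``rerunning the Sugano/reverse H\"older analysis for $A^*$''. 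That machinery, however, applies to operators of the form $\Delta^2+\tilde V^2$ obtained after dividing by $a^2$; the formal adjoint $A^*v=a^2\Delta^2 v+4\nabla a^2\cdot\nabla\Delta v+2\Delta a^2\,\Delta v+4\,{\rm Tr}\,D^2a^2 D^2v+4\nabla\Delta a^2\cdot\nabla v+\Delta^2a^2\,v+V^2v$ carries first-, second- and third-order terms whose coefficients (involving $|x|^{\alpha-1},\dots,|x|^{2\alpha-4}$, $|x|^{\alpha-4}$) are unbounded at infinity and singular at the origin, so after dividing by $a^2$ you do not get a Schr\"odinger-type operator to which Sugano's theorem applies. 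You flag this yourself as ``the subtle point'', but it is not a technicality: without it you get neither $g\in H^4$ nor the removability of a possible contribution supported at the origin, and the argument stops. (Note also that starting only from $g\in L^2$, even membership of $g$ in the adjoint domain $D(A^*)$ is exactly the same kind of maximal-regularity statement you are trying to prove for $A$.)

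Your ``robust alternative'' has the same problem in a circular form: solving $(A+\lambda)u_n=f_n$ with $f_n$ smooth and compactly supported only gives $u_n\in D(A)$ (the resolvent does not preserve compact support), and the claim that $u_n\in D_2(A)$ ``via local elliptic regularity combined with fast decay at infinity'' is precisely the inclusion $D(A)\subset D_2(A)$ you are trying to prove; local regularity gives only $H^4_{\rm loc}(\R^N\setminus\{0\})$, which the paper already has, and no global weighted bounds. The paper closes this gap differently: it approximates not the data but the domain, solving the Dirichlet problem $Au+\lambda u=f$, $u=Du=0$ on the boundary of the annulus $C(1/\rho,\rho)$, where the coefficients are bounded; the approximants $u_\rho$ lie in a class where the a priori estimates of Proposition~\ref{pr:core-for-A2} and \eqref{normah4} apply with constants independent of $\rho$, so a weak compactness argument produces a limit $u\in D_2(A)$ with $Au+\lambda u=f$, and invertibility of $A+\lambda$ on $D(A)$ identifies $u$ with the given $\tilde u\in D(A)$. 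Some device of this kind (uniform weighted estimates for an approximating family on which they are already known to hold) is what your proposal is missing.
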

\begin{proof}
We have to prove only the inclusion $D(A)\subset D_{2}(A)$.

Let $\tilde u\in D(A)$ and set $f=A\tilde u  +\lambda\tilde u $.
The operator $A$ in 
$C\left(\frac{1}{\rho},\rho\right):=B(\rho)\setminus B\left(\frac{1}{\rho}\right)$, $\rho>0$,  is an elliptic operator with bounded coefficients, then for a suitable $\lambda$ the problem
\begin{equation*}\label{palla0}
\left\{
\begin{array}{ll}
Au+\lambda u=f&\text{ in }C\left(\frac{1}{\rho},\rho\right),\\
u=0&\text{ on }\partial C\left(\frac{1}{\rho},\rho\right),\\
D u=0&\text{ on }\partial C\left(\frac{1}{\rho},\rho\right),
\end{array}
\right.
\end{equation*}
admits a unique solution $u_\rho$ in $W^{4,2}\left(C\left(\frac{1}{\rho},\rho\right)\right)\cap W_0^{1,2}\left(C\left(\frac{1}{\rho},\rho\right)\right)$ 
 (cf. \cite[Section 3.2]{luna}). 
Now $u_\rho\in D_2(A)$ and by Proposition \ref{pr:core-for-A2} and \eqref{normah4}
\begin{align*}
&\||x|^{2\alpha}D^4 u_\rho\|_{L^2(B(\rho))}+\| |x|^{2\alpha-1}D^3 u_\rho\|_{L^2(B(\rho))}+\| |x|^{2\alpha-2} D^2 u_\rho\|_{L^2(B(\rho))}+\||x|^{2\alpha-3}D u_\rho\|_{L^2(B(\rho))}\\
&\quad\quad +\||x|^{2\alpha-4}u_\rho\|_{L^2(B(\rho))}+\|V^2u_\rho\|_{L^2(B(\rho))}+||u||_{H^4(B(\rho))}  
 \le C\left(\|Au_\rho\|_{2}+\| u_\rho\|_{2}\right).
\end{align*}
By a standard weak compactness argument it is possible to construct a sequence
$(u_{\rho_n})$ which converges to a function $u$ in $W^{4,2}_{	\rm loc}(\z)$ such that $Au+\lambda u=f$.
Since the estimates above are independent of $\rho$, also $u\in D_2(A)$.
Then we have $A\tilde u+\lambda \tilde u=Au+\lambda u$ and since $D_2(A)\subset D(A)$ and $A+\lambda$ is invertible on $D(A)$  being the generator of a  $C_0$-semigroup, it is possible to conclude that $\tilde u=u$.

\end{proof}

In the last part of this section we investigate on the spectrum of the operator $A$. 
\begin{proposition}\label{spectrum} For any $\alpha>0$, $\beta>(\alpha-2)^+$, and $N>8$ the spectrum of $A$ consists of a sequence of negative real eigenvalues which accumulates at $-\infty$.
\end{proposition}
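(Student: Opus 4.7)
The plan is to combine compactness of the resolvent with a weighted-symmetry argument for reality of the eigenvalues. First, by Theorem \ref{thmform} the operator $A+\lambda_0$ is invertible, so it suffices to prove that the inclusion $D(A)\hookrightarrow L^2(\z)$ is compact. Using Theorem \ref{th:domain} and \eqref{normah4}, the graph norm $\|\cdot\|_A$ controls both $\|u\|_{H^4(\z)}$ and $\|V^2u\|_2=\||x|^{2\beta}u\|_2$. Given a bounded sequence $(u_n)\subset D(A)$, Rellich--Kondrachov produces a subsequence converging in $L^2(B(R))$ for every $R>0$, while the bound $\||x|^{2\beta}u_n\|_2\leq C$ combined with $\beta>0$ yields the uniform tail estimate
\[
\int_{|x|>R}|u_n|^2\,dx\leq R^{-4\beta}\|\,|x|^{2\beta}u_n\|_2^{\,2}\xrightarrow[R\to\infty]{}0.
\]
A diagonal extraction then produces an $L^2(\z)$-convergent subsequence, so $(A+\lambda_0)^{-1}$ is compact on $L^2(\z)$.

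By the Riesz--Schauder theorem applied to $(A+\lambda_0)^{-1}$, the spectrum $\sigma(A)$ consists of a sequence of isolated eigenvalues of finite multiplicity whose only possible accumulation point is $\infty$. To establish reality I would exploit that $A$ is symmetric with respect to the weighted inner product $\langle u,v\rangle_{a^{-2}}=\int_\z u\bar v\,a^{-2}\,dx$. Indeed, if $Au=\mu u$ with $u\in D(A)\setminus\{0\}$, then multiplying by $\bar u/a^2$, integrating, and using $\int_\z\Delta^2u\,\bar u\,dx=\int_\z|\Delta u|^2\,dx$ (valid for $u\in H^4(\z)$) gives
\[
\mu\int_\z\frac{|u|^2}{a^2}\,dx=\int_\z|\Delta u|^2\,dx+\int_\z\frac{V^2}{a^2}|u|^2\,dx.
\]
The right-hand side is real and strictly positive since $V/a>0$ a.e.\ and $u\not\equiv 0$, so $\mu>0$; equivalently, the eigenvalues of the generator $-A$ are strictly negative.

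Combining the two steps, $\sigma(-A)$ is a sequence of negative eigenvalues of finite multiplicity accumulating at $-\infty$. The main obstacle is the compactness in the first step: the $H^4$ bound alone only yields local compactness, so the decay supplied by the potential weight $|x|^{2\beta}$ with $\beta>0$ is essential (this is precisely where the assumption $\beta>(\alpha-2)^+$ more than suffices). The reality argument is almost free once one notices the symmetry of $A$ in $L^2(\z,a^{-2}dx)$; the integration by parts for $u\in D(A)$ is justified by the fact that $C_c^\infty(\z\setminus\{0\})$ is a core (Proposition \ref{pr:core-for-A2}) together with the standard density of $C_c^\infty(\z)$ in $H^4(\z)$.
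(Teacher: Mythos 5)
Your proposal is correct and takes essentially the same route as the paper: compactness of the embedding $D(A)\hookrightarrow L^2(\z)$ from the local $H^4$ bound \eqref{normah4} together with the uniform tail decay supplied by $\||x|^{2\beta}u\|_2\leq C(\|Au\|_2+\|u\|_2)$, followed by the weighted identity $\mu\int_\z a^{-2}|u|^2\,dx=\int_\z\bigl(|\Delta u|^2+a^{-2}V^2|u|^2\bigr)dx$ to get reality and the sign of the eigenvalues. The only cosmetic differences are that you extract a convergent subsequence by a diagonal argument and invoke Riesz--Schauder explicitly, whereas the paper argues total boundedness of the unit ball of $D(A)$; the eigenvalue computation is identical (the paper writes the eigenvalue equation as $Au+\lambda u=0$, which matches your sign convention for the generator $-A$).
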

\begin{proof} We prove that $D(A)$ is compactly embedded into $L^{2}\left( \R^{N} \right)$, this will prove that the spectrum of $A$ consists of eigenvalues.
By  Proposition \ref{pr:core-for-A2} and Theorem \ref{th:domain} we have
\[
\int_{\R^{N}} |x|^{4\beta}|u^{2}|dx \leq C\left( \|Au\|_{2}+\|u\|_{2}\right).
\]
Then we consider the unitary ball of $D(A)$ that is ${\mathcal B}=\{u\in D(A)\,|\, \|Au\|_{2}+\|u\|_{2}\leq 1\}$ and for any $u\in \mathcal B$ we have
\[
\int_{\R^{N}} |x|^{4\beta}|u^{2}|dx \leq C.
\]
Now fix $\varepsilon>0$, there exists $M>0$ such that $\frac{C}{M^{4\beta}}<\varepsilon^{2} $, so that
\begin{align*}
& \int_{|x|>M} |u^{2}|dx\leq \frac{1}{M^{4\beta}} \int_{|x|>M} |x|^{4\beta}|u^{2}|dx\leq \frac{C}{M^{4\beta}} < \varepsilon^{2}. \\
\end{align*}
Let us now consider the set of the restriction to $B(M)$, the ball of center 0 and radius $M$, of the functions in $\mathcal B$ that we denote by ${\mathcal B}_M$.
We observe that for every $u\in D(A)$,   we have $\|u\|_{H^{4}\left( \R^{N} \right)}\leq C\left( \|Au\|_{2}+\|u\|_{2} \right)$ 
then ${\mathcal B}_M$ is bounded in $H^{4}\left( B(M) \right)$.
Since  $H^{4}(B(M))$ is compactly embedded into $L^{2}(B(M))$  we have that ${\mathcal B}_M$ is totally bounded in 
$L^{2}\left( B(M) \right)$. Then there exists $u_{1},\dots ,u_{n}\in L^{2}\left( B(M) \right) $ such that
\[
{\mathcal B}_M\subset \cup_{i=1}^{n}\{u\in L^{2}(B(M))\,\\, \|u-u_{i}\|_{L^{2}(B(M))}<\varepsilon \}.
\]
Now, for $i=1,\dots n$, we set 
$\tilde u_{i}(x)=u_{i}(x)$ if $|x|\leq M$ and $\tilde u_{i}(x)=0$  otherwise. For every $u\in {\mathcal B}$    
we have $\|u-\tilde u_{i}\|_{2}\leq 2\varepsilon$ and
\[
{\mathcal B} \subset \cup_{i=1}^{n}\{u\in L^{2}(\R^{N})\,\\, \|u-\tilde u_{i}\|_{2}<2\varepsilon \}.
\]
So $\mathcal B$ is totally bounded in $L^{2}(\R^{N})$.

Now let $\lambda \in \C $ be an eigenvaule of $A$, there exists $u\in D(A)$ such that $Au+\lambda u=0$ and then 
\[
\Delta^{2}u+\frac{V^{2}}{a^{2} }u+\lambda \frac{u}{a^{2}}=0.
\]
Multiplying by $u$ and integrating by parts we obtain, recalling that $u\in H^4(\z),$
\[
\int_{\R^{N}}\left ((\Delta u)^{2}+\frac{V^{2}}{a^{2} }u^{2}\right)dx+\lambda \int_{\R^{N}} \frac{u^{2}}{a^{2}}dx=0
\]
from which follows that $\lambda$ is real and negative.
\end{proof}

\bibliographystyle{amsplain}

\bibliography{bibfile}

\providecommand{\bysame}{\leavevmode\hbox to3em{\hrulefill}\thinspace}
\providecommand{\MR}{\relax\ifhmode\unskip\space\fi MR }
\providecommand{\MRhref}[2]{%
  \href{http://www.ams.org/mathscinet-getitem?mr=#1}{#2}
}
\providecommand{\href}[2]{#2}
\begin{thebibliography}{10}

\bibitem{adams}
D.~Adams, \emph{${L}^p$ potential theory techniques and nonlinear {PDE}},
  Potential theory (Nagoya, 1990), 1–15, de Gruyter, Berlin, 1992.

\bibitem{agrt}
D.~Addona, F.~Gregorio, A.~Rhandi, and C.~Tacelli, \emph{Bi-{K}olmogorov type
  operators and weighted {R}ellich's inequalities}, Nonlinear Differ. Equ.
  Appl. \textbf{29} (2022), no.~2, Paper No. 13, 37 pp.

\bibitem{agm}
S.~Agmon, \emph{The ${L}^p$ approach to the {D}irichlet problem. {I}.
  {R}egularity theorems.}, Ann. Scuola Norm. Sup. Pisa Cl. Sci. \textbf{13}
  (1959), no.~3, 405–448.

\bibitem{BelhajAli}
S.~Belhaj Ali, \emph{Elliptic operators with unbounded diffusion and singular
  lower order terms in ${L}^p$-spaces}, Semigroup Forum \textbf{98} (2019),
  499--520.

\bibitem{antman}
S.~S. Antman, \emph{Nonlinear problems of elasticity, 2nd edn}, Applied
  Mathematical Sciences, p. 107. Springer, New York, 2005.

\bibitem{boutiah-et-al1}
S.~E. Boutiah, F.~Gregorio, A.~Rhandi, and C.~Tacelli, \emph{Elliptic operators
  with unbounded diffusion, drift and potential terms}, Journal of Differential
  Equations. \textbf{264} (2018), no.~3, 2184--2204.

\bibitem{boutiah-et-al2}
S.~E. Boutiah, A.~Rhandi, and C.~Tacelli, \emph{Kernel estimates for elliptic
  operators with unbounded diffusion, drift and potential terms}, Discrete
  Contin. Dyn. Syst. Ser. A. \textbf{39} (2019), no.~2, 803--817.

\bibitem{bcgt}
S.E. Boutiah, L.~Caso, F.~Gregorio, and C.~Tacelli, \emph{Some results on
  second-order elliptic operators with polinomially growing coefficients in
  ${L}^p$-spaces}, J. Math. Anal. Appl. \textbf{501} (2021), 21 pp.

\bibitem{can-gre-rhan-tac}
A.~Canale, F.~Gregorio, A.~Rhandi, and C.~Tacelli, \emph{Weighted {H}ardy’s
  inequalities and {K}olmogorov type operators}, Applicable Analysis
  \textbf{98} (2019), no.~7, 1236--1254.

\bibitem{can-rhan-tac1}
A.~Canale, A.~Rhandi, and C.~Tacelli, \emph{Schr{\"o}dinger type operators with
  unbounded diffusion and potential terms}, Ann. Sc. Norm. Super. Pisa Cl. Sci.
  \textbf{XVI} (2016), no.~2, 581--601.

\bibitem{can-rhan-tac2}
\bysame, \emph{Kernel estimates for {S}chr{\"o}dinger type operators with
  unbounded diffusion and potential terms}, Journal of Analysis and its
  Applications \textbf{36} (2017), no.~4, 377--392.

\bibitem{can-tac1}
A.~Canale and C.~Tacelli, \emph{Kernel estimates for a {S}chr{\"o}dinger type
  operator}, Riv. Mat. Univ. Parma \textbf{7} (2016), 341--350.

\bibitem{davies95a}
E.~B. Davies, \emph{Uniformly elliptic operators with measurable coefficients},
  J. Funct. Anal. \textbf{132} (1995), 141–169.

\bibitem{dav-hin}
E.~B. Davies and A.~M. Hinz, \emph{Explicit constants for {R}ellich
  inequalities in ${L}^p({\Omega})$}, Math. Z. \textbf{227} (1998), no.~3,
  511--523.

\bibitem{dav-hinK}
\bysame, \emph{Kato class potentials for higher order elliptic operators}, J.
  London Math. Soc. (2) \textbf{58} (1998), no.~3, 669–678.

\bibitem{dur-man-tac1}
T.~Durante, R.~Manzo, and C.~Tacelli, \emph{Kernel estimates for
  {S}chr{\"o}dinger type operators with unbounded coefficients and singular
  potential terms}, Ricerche di Matematica \textbf{65} (2016), no.~1, 289--305.

\bibitem{for-gre-rha}
S.~Fornaro, F.~Gregorio, and A.~Rhandi, \emph{Elliptic operators with unbounded
  diffusion coefficients perturbed by inverse square potentials in
  ${L}^p$-spaces}, Communications on Pure \& Applied Analysis \textbf{15}
  (2016), 2357--2372.

\bibitem{for-lor}
S.~Fornaro and L.~Lorenzi, \emph{Generation results for elliptic operators with
  unbounded diffusion coefficients in ${L}^p$ and ${C}_b$-spaces}, Discr. Cont.
  Dyn. Syst. A \textbf{18} (2007), 747--772.

\bibitem{gal-kam}
V.~A. Galaktionov and I.~V. Kamotski, \emph{On nonexistence of
  {B}aras-{G}oldstein type for higher-order parabolic equations with singular
  potentials}, Trans. Amer. Math. Soc. \textbf{362} (2010), no.~8, 4117–4136.

\bibitem{gre-ker}
F.~Gregorio and J.~Kerner, \emph{On {L}ennard-{J}ones-type potentials on the
  half-line}, Arch. Math. \textbf{112} (2019), no.~1, 101--111.

\bibitem{gre-mil}
F.~Gregorio and S.~Mildner, \emph{Fourth-order {S}chr{\"o}dinger type operator
  with singular potentials}, Arch. Math. \textbf{107} (2016), 285--294.

\bibitem{liu-don}
Y.~Liu and J.~Dong, \emph{Some estimates of higher order {R}iesz transform
  related to {S}chr{\"o}dinger type operators}, J. Potential Anal. \textbf{32}
  (2010), 32--41.

\bibitem{lor-rhan}
L.~Lorenzi and A.~Rhandi, \emph{On {S}chr{\"o}dinger type operators with
  unbounded coefficients: generation and heat kernel estimates}, J. Evol. Equ.
  \textbf{15} (2015), 53--88.

\bibitem{luna}
A.~Lunardi, \emph{Analytic semigroups and optimal regularity in parabolic
  problems}, Birkhauser, 1995.

\bibitem{mele}
V.V. Meleshko, \emph{Selected topics in the history of the two-dimensional
  biharmonic problem}, Appl. Mech. Rev. \textbf{56} (2003), 33--85.

\bibitem{met-oka-sob-spi}
G.~Metafune, N.~Okazawa, M.~Sobajima, and C.~Spina, \emph{Scale invariant
  elliptic operators with singular coefficients}, J. Evol. Equ. \textbf{16}
  (2016), 391--439.

\bibitem{Met-Sob-Spi2}
G.~Metafune, M.~Sobajima, and C.~Spina, \emph{Weighted {C}alder\'on–{Z}ygmund
  and {R}ellich inequalities in ${L}^p$}, Math. Ann. \textbf{361} (2015),
  313–366.

\bibitem{met-spi2}
G.~Metafune and C.~Spina, \emph{Elliptic operators with unbounded diffusion
  coefficients in ${L}^p$ spaces}, Ann. Sc. Norm. Super. Pisa Cl. Sci. (5)
  \textbf{11} (2012), no.~2, 303--340.

\bibitem{met-spi-tac}
G.~Metafune, C.~Spina, and C.~Tacelli, \emph{Elliptic operators with unbounded
  diffusion and drift coefficients in ${L}^p$ spaces}, Adv. Diff. Equat
  \textbf{19} (2014), no.~5-6, 473--526.

\bibitem{met-spi-tac2}
\bysame, \emph{On a class of elliptic operators with unbounded diffusion
  coefficients}, Evol. Equ. Control Theory \textbf{3} (2014), no.~4, 671--680.

\bibitem{ouhabaz}
E.M. Ouhabaz, \emph{Analysis of heat equations on domains}, London Math. Soc.
  Monogr. Ser., {\bf 31}, Princeton Univ. Press, 2004.

\bibitem{sgk}
T.~Sedrakyan, L.~Glazman, and A.~Kamenov, \emph{Absence of bose condensation on
  lattices with moat bands}, Phys. Rev. B \textbf{89} (2014), 201112.

\bibitem{sugano}
S.~Sugano, \emph{${L}^p$ {E}stimates for {S}ome {S}chr{\"o}dinger {T}ype
  {O}perators and a {C}alderón-{Z}ygmund {O}perator of {S}chr{\"o}dinger
  {T}ype}, Tokyo Journal of Mathematics \textbf{30} (2007), no.~1, 179--197.

\end{thebibliography}

\end{document}